\newtheorem{theorem}{Theorem}[section]
\newtheorem{proposition}[theorem]{Proposition}
\newtheorem{conjecture}{Conjecture}
\begin{document}
	
	\numberwithin{equation}{section}
	
	\title{Topology at infinity of complete gradient Schouten solitons}
	
	\author{V. Borges\footnote{Universidade Federal do Pará, Faculdade de Matemática, 66075-110, Belém-PA, Brazil, valterborges@ufpa.br.} \quad H. A. Rosero-Garc\'ia \footnote{ Universidade de Bras\'{\i}lia,
			Department of Mathematics,
			70910-900, Bras\'{\i}lia-DF, Brazil, hector.garcia@unb.br, Partially supported by CAPES and CNPq.}
		\quad J. P. dos Santos \footnote{Universidade de Bras\'{\i}lia,
			Department of Mathematics,
			70910-900, Bras\'{\i}lia-DF, Brazil, joaopsantos@unb.br. Partially supported by CNPq 315614/2021-8.} 
	} 
	
	\date{}
	
	\maketitle{}
	
	
	
	\begin{abstract}
		In this paper, we study ends of complete gradient non-trivial Schouten solitons. Without any additional assumptions, we show the shrinking ones have finitely many ends, and the expanding ones are connected at infinity. We also provide information regarding the non-parabolicity of the ends in the shrinking setting, and on the behavior of the potential function and volume growth in the expanding case.
	\end{abstract}
	
	\vspace{0.2cm} 
	\noindent\emph{2020 Mathematics Subject Classification} : 
	35Q51, 
	53C20, 
	53C21, 
	53C25\\
	\emph{Keywords}: Schouten solitons, Ends, Parabolicity.

	\section{Introduction and Main Results}

We say that a Riemannian manifold $(M^n,g)$ is a {\it gradient Schouten soliton} if there are a  function $f\in C^{\infty}(M)$ and a constant $\lambda\in\mathbb{R}$ satisfying
 \begin{equation}\label{schoutensoliton}
		\emph{Ric}+\nabla\nabla f = \left(\frac{R}{2(n-1)}+\lambda\right)g,
	\end{equation}
	where $\nabla\nabla f$, $Ric$ and $R$ represent the Hessian of $f$, the Ricci tensor and the scalar curvature of $g$, respectively. Gradient Schouten solitons constitute a particular class of a more general structure of solitons. Namely, given $\rho\in\mathbb{R}$, we say $(M^n,g,f,\lambda)$ is a {\it gradient} $\rho$-{\it Einstein soliton} if 
	\begin{align}\label{genfundeq}
		Ric+\nabla\nabla f=\left(\rho R+\lambda\right)g.
	\end{align}
	Therefore, Schouten solitons are $\rho$-Einstein solitons for $\rho=1/2(n-1)$. In any case, $f$ is called \emph{potential funcion} and $(M^n,g,f,\lambda)$ is said to be \textit{shrinking}, \textit{steady} or \textit{expanding}, if $\lambda>0$, $\lambda=0$ or $\lambda<0$, respectively.

    { Given $\rho\in\mathbb{R}$, a {\it Ricci-Bourguignon flow} ({\it associated to $\rho$}) on a manifold $M$ is a family of Riemannian metrics $g(t)$ defined on $M$ satisfying
		\begin{align}\label{rhoeinseq}
			\frac{\partial}{\partial t}g=-2(Ric-\rho R g).
		\end{align}
		Notice that when $\rho=0$, we have the Ricci flow. The principal symbol of the operator on the right hand side of \eqref{rhoeinseq} was computed in \cite{catino2}. This was used to show short time existence on compact manifolds, when $\rho<1/2(n-1)$. For $\rho>1/2(n-1)$, it has negative eigenvalues, indicating the problem is not likely to have solution in general. In the remaining case $\rho=1/2(n-1)$, in which $g(t)$ is called {\it Schouten flow}, the principal symbol has no negative eigenvalues but way too many zeros, and the classical theory does not apply, isolating $1/2(n-1)$ as a bifurcation value of \eqref{rhoeinseq}. It was shown in \cite{catino1} that gradient $\rho$-Einstein solitons generate solutions invariant by dilations and diffeomorphisms of \eqref{rhoeinseq}. This is one of the main motivations for studying such manifolds.}

        Another major motivation comes from the fact that \eqref{genfundeq} generalizes at the same time Einstein's and Ricci soliton's equations, both of which have widely been investigated in the last decades. It raises a natural examination of how analytic, geometric or topological properties of these structures change {as $\lambda$ and $\rho$ vary}. For instance, on the one hand, Einstein manifolds have constant scalar curvature and Schouten solitons have bounded scalar curvature \cite{brgs}. On the other hand, complete gradient Ricci solitons have scalar curvature bounded from below and little is known about upper bounds, while it is still unknown if $\rho$-Einstein solitons, in general, have bounds on the scalar curvature.

		Catino \textit{et al.} \cite{catino}  showed that complete gradient steady Schouten solitons with $\nabla f$ complete are Ricci flat. Still in \cite{catino}, rotationally symmetric non-trivial gradient steady $\rho$-Einstein solitons were constructed for $\rho<1/2(n-1)$ or $\rho\geq1/(n-1)$ using similar ideas on the construction of the Bryant soliton \cite{brnt}. It was also proved in \cite{catino} that every complete $3$-dimensional shrinking Schouten soliton is isometric to a finite quotient of either $\mathbb{S}^3$, $\mathbb{R}^3$ or $\mathbb{R}\times\mathbb{S}^2$. The same result was already known for Ricci solitons \cite{caochenzhou}. Whenever a $\rho$-Einstein soliton has constant potential function, we say it is {\it trivial}. Otherwise, it is said to be {\it non-trivial}. The rigid $\rho$-Einstein solitons constructed on $\mathbb{R}^{n-k}\times N^k$ for $k<n$ provide simple examples of non-trivial $\rho$-Einstein solitons (see \cite{brgs,catino} for more details).

         As observed in \cite{catino}, compact shrinking gradient Schouten solitons have also appeared in characterizing equality in the {\it de Lellis-Topping inequality}, also known as {\it Almost-Schur lemma}. For more details, see \cite{lelistop}. On the other hand, the locally conformally flat ones characterize the solitons of another geometric flow, known as {\it Riemann flow}. See \cite{tobabame} and references therein for details.
 
		In this paper, we are interested in the ends structure of a complete gradient Schouten soliton, that is, in its topology at infinity. In \cite{pLi0}, Li and Tam developed techniques to count the number of ends of Riemannian manifolds, which were later used to study complete manifolds with positive spectrum \cite{pLiW,pLiW2}. Munteanu and Wang \cite{MuWa1,MuWa2,MuWa4,MuWa5} have been using a variation of this technique to study the ends of Ricci solitons. In particular, they managed to show in \cite{MuWa1} that a gradient steady Ricci soliton must be connected at infinity. Such a result does not for $\lambda\neq0$ once $\mathbb{R}\times N^{n-1}$, for a compact manifold $N$ of constant Ricci curvature $\lambda$, is a non-trivial Ricci soliton with two ends. Nevertheless, it was proved in \cite{famazha} that gradient shrinking Ricci solitons with bounded scalar curvature have {\it finite topological type}, i.e., are homeomorphic to the interior of a compact manifold with boundary. In particular, these solitons have finitely many ends. In \cite{famazha} the authors proposed the following
	\begin{conjecture}[Fang, Man and Zhang]\label{conjFTT}
		Any shrinking Ricci soliton has finite topological type.
	\end{conjecture}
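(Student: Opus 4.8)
The plan is to reduce the conjecture to a statement about the potential function alone. Assuming $M$ is noncompact (the compact case being trivial), normalize $\lambda=1/2$, so that $\mathrm{Ric}+\nabla\nabla f=\tfrac12 g$, and invoke the classical soliton identities $\Delta f=\tfrac n2-R$ and, after adding a constant to $f$, $R+|\nabla f|^2=f$. Two facts requiring no curvature hypothesis are then available: $R\ge 0$ (Chen), hence $f=R+|\nabla f|^2\ge 0$; and (Cao--Zhou) $f$ is a proper exhaustion growing quadratically in the distance to a base point. I would then proceed in three steps: (i) prove that the critical set $C=\{\nabla f=0\}$ is compact; (ii) choose, via Sard's theorem, a regular value $c_0>\sup_C f$, which is finite once (i) holds; (iii) use the normalized gradient flow of $f$ on $\{f\ge c_0\}$ to identify $M$ with the interior of the compact manifold-with-boundary $\{f\le c_0\}$, which is exactly finite topological type.

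Steps (ii) and (iii) are the soft ones, and I would carry them out first. On $\{f\ge c_0\}$ there are no critical points, so $X=\nabla f/|\nabla f|^2$ is a smooth vector field with $Xf\equiv 1$; since $f$ is proper and $M$ is complete, every forward trajectory of $X$ stays in a compact sublevel set of $f$, on which $|\nabla f|$ is bounded away from zero, so $X$ is forward-complete. Its time-$(c-c_0)$ flow then yields a diffeomorphism $f^{-1}(c_0)\times[0,c-c_0]\cong\{c_0\le f\le c\}$ for every $c>c_0$; letting $c\to\infty$ identifies $\{f\ge c_0\}$ with an infinite collar over the compact hypersurface $f^{-1}(c_0)$, and gluing this collar to the compact manifold-with-boundary $\{f\le c_0\}$ shows $M$ is diffeomorphic to $\mathrm{int}\{f\le c_0\}$. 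Since $R=f$ on $C$, Step (i) is equivalent to bounding the scalar curvature along the critical set of the potential, i.e.\ to ruling out a sequence $x_i\to\infty$ with $\nabla f(x_i)=0$.

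Thus the whole conjecture collapses to Step (i), and this is where I expect the genuine obstacle to lie. The only general mechanism I know that forces (i) is a \emph{global} bound on $R$, and whether every complete gradient shrinking Ricci soliton has bounded scalar curvature is itself a well-known open problem — established so far only under extra assumptions (notably it is hypothesized outright in the theorem of Fang--Man--Zhang recalled in the Introduction, and it is known in low dimensions). To attack (i) head-on I would try to propagate the control available near $C$ (where $R=f$ is small and $\Delta f=\tfrac n2-R$) outward along the $f$-gradient flow, using the drift-Laplacian identity $\Delta R-\langle\nabla f,\nabla R\rangle=R-2|\mathrm{Ric}|^2$ together with Cao--Zhou-type comparison on the annuli $\{c\le f\le 2c\}$. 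The difficulty — and the reason the unbounded-curvature scenario cannot yet be excluded — is that this identity cannot be closed without an a priori handle on $|\mathrm{Ric}|^2$, so the maximum-principle/Bochner machinery stalls precisely there.
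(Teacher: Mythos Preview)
The statement you were asked to prove is Conjecture~\ref{conjFTT}, and the paper does \emph{not} prove it; indeed, the very next sentence in the Introduction reads ``While this conjecture is still unsolved\ldots''. So there is no ``paper's own proof'' to compare against, and your proposal should be read on its own terms.

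On those terms, your write-up is an accurate diagnosis rather than a proof. Your Steps~(ii) and~(iii) are correct and standard: once the critical set of $f$ is compact, properness of $f$ and the gradient-flow collaring argument give finite topological type exactly as you describe (this is precisely the argument of Fang--Man--Zhang under their hypothesis $R\leq C$, and it is the same mechanism the paper uses to prove the Schouten analogue, Theorem~\ref{thm1item1}, where $0\leq R\leq 2(n-1)\lambda$ holds automatically). Your reduction $R=f$ on $C$, hence ``compactness of $C$'' $\Leftrightarrow$ ``$R$ bounded along $C$'', is also correct.

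The gap is the one you yourself name: you have not established Step~(i), and you correctly observe that it is equivalent to a known open problem (boundedness of $R$, at least along $\{\nabla f=0\}$). The paragraph in which you sketch an attack via the evolution equation $\Delta_f R = R - 2|\mathrm{Ric}|^2$ does not close, precisely because of the uncontrolled $|\mathrm{Ric}|^2$ term, as you say. So what you have produced is a faithful explanation of \emph{why} Conjecture~\ref{conjFTT} remains open and what would suffice to settle it, not a proof of it. That is the honest state of affairs, and it matches the paper's own treatment of this conjecture as unresolved.
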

	
	While this conjecture is still unsolved, the corresponding issue for gradient shrinking Schouten solitons has an affirmative answer, as stated below.
	
	\begin{theorem}\label{thm1item1}
		Any non-trivial complete gradient shrinking Schouten soliton has finite topological type. In particular, it has finitely many ends, each of them homeomorphic to the product of a compact manifold with $[0,\infty)$.
		
	\end{theorem}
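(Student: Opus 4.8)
The plan is to follow, in the Schouten setting, the scheme used in \cite{famazha} for shrinking Ricci solitons, the point being that for Schouten solitons the a priori control of the scalar curvature holds with no extra hypothesis. I would first record the two algebraic identities forced by \eqref{schoutensoliton}, which are special to the value $\rho=\tfrac{1}{2(n-1)}$. Taking the divergence of \eqref{schoutensoliton} and using $\operatorname{div}\mathrm{Ric}=\tfrac12\nabla R$, the Bochner identity $\operatorname{div}(\nabla\nabla f)=\nabla(\Delta f)+\mathrm{Ric}(\nabla f,\cdot)$ and the trace $R+\Delta f=n\big(\tfrac{R}{2(n-1)}+\lambda\big)$, one checks that every term in $\nabla R$ cancels, leaving $\mathrm{Ric}(\nabla f,\cdot)=0$; plugging this back into \eqref{schoutensoliton} evaluated on $\nabla f$ gives $\tfrac12\nabla|\nabla f|^{2}=\big(\tfrac{R}{2(n-1)}+\lambda\big)\nabla f$, hence
\[
\nabla\!\left(|\nabla f|^{2}-2\lambda f\right)=\frac{R}{n-1}\,\nabla f .
\]
On top of this I would invoke the known facts about a complete gradient shrinking Schouten soliton (see \cite{brgs}, and also \cite{catino}): the scalar curvature is bounded and non-negative, and the potential $f$ is proper, bounded below, with the quadratic estimate $|\nabla f|^{2}\ge 2\lambda f-C$ — the displayed identity together with $R\ge0$ being precisely the mechanism behind this estimate, as it makes $|\nabla f|^{2}-2\lambda f$ non-decreasing along the integral curves of $\nabla f$.

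The next step is to confine the critical points of $f$ to a compact region. From $|\nabla f|^{2}\ge2\lambda f-C$ and $\lambda>0$ it follows that $\nabla f$ is nowhere zero on $\Omega:=\{\,f>(C+1)/(2\lambda)\,\}$, and properness of $f$ makes $M\setminus\Omega$ compact. If $M$ is compact the theorem is immediate, so assume $M$ non-compact; since the soliton is non-trivial, $f$ is non-constant, and properness then forces $f$ to be unbounded above with compact sublevel sets, so $\{f\le t\}$ exhausts $M$ and $f\to\infty$ along every end.

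Finally I would run the standard ``Morse theory with no critical points at infinity''. By Sard's theorem pick a regular value $c>(C+1)/(2\lambda)$ of $f$; then $\Sigma:=f^{-1}(c)$ is a closed $(n-1)$-manifold, $N:=\{f\le c\}$ is a compact manifold with boundary $\Sigma$ containing every critical point of $f$, and on $\{f\ge c\}$ the vector field $X:=\nabla f/|\nabla f|^{2}$ is smooth. Along the flow of $X$ the function $f$ increases with unit speed, so by properness this flow is complete in forward time, and running it backwards to meet $\Sigma$ exactly once shows that it defines a diffeomorphism from $\Sigma\times[0,\infty)$ onto $\{f\ge c\}$. Therefore $M$ is diffeomorphic to $N$ with an open collar attached, that is, to $\operatorname{int}(N)$, which proves the finite topological type; and the ends of $M$ correspond bijectively to the finitely many connected components of $\Sigma$, each being diffeomorphic, in particular homeomorphic, to the product of a compact manifold with $[0,\infty)$.

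I expect the only genuinely delicate ingredient to be the confinement of the critical set of $f$ to a compact set, which rests on the boundedness and non-negativity of $R$ — exactly the place where the Schouten case improves on the Ricci case, where the analogous statement requires bounded scalar curvature as a hypothesis, while here it is automatic. Once $\nabla f$ is nonvanishing outside a compact set and $f$ is proper, the remainder is the routine Cheeger--Gromoll-type gradient-flow argument above; non-triviality is used only to ensure $f$ is non-constant, so that the exhaustion by sublevel sets is genuine.
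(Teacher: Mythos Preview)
Your proposal is correct and follows essentially the same route as the paper: both use that $f$ is proper with $|\nabla f|^{2}\ge 2\lambda(f-f_{0})$ (quoted from \cite{brgs}) to confine the critical set of $f$ to a compact sublevel set, and then apply the standard Morse-theoretic gradient-flow argument to identify the exterior with $\Sigma\times[0,\infty)$. The only cosmetic differences are that you re-derive the identity $\mathrm{Ric}(\nabla f,\cdot)=0$ and the monotonicity of $|\nabla f|^{2}-2\lambda f$ (which the paper simply cites as Proposition~\ref{thmcatino} and Theorem~\ref{th11val33}), and that you spell out the flow of $\nabla f/|\nabla f|^{2}$ explicitly where the paper just invokes Theorem~3.1 of \cite{mil}; incidentally, Sard's theorem is superfluous since every value above your threshold is already regular.
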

	
	The theorem above follows from the properties of the potential function of a Schouten soliton obtained in \cite{brgs}. See Section \ref{shrsch} for the proof.
	
	A natural question tied to both the conjecture and the theorem above is whether we can estimate the number of ends in terms of the geometry of $M$. It was shown in \cite{MuWa4} by Munteanu and Wang that Kähler gradient shrinking Ricci solitons are connected at infinity, i.e., they have only one end. In \cite{MuWa5}, they obtained the same conclusion in the Riemannian setting, assuming the scalar curvature is bounded from above by $\frac{2n}{3}\lambda$. In \cite{brstrs}, the authors were able to extend the last result to complete gradient shrinking $\rho$-Einstein solitons with $\rho\in[0,1/2(n-1))$. This interval is required to rule out the existence of $\varphi$-parabolic ends, and to show there is no more than one $\varphi$-non-parabolic end, where the weight function is $\varphi=-af$ with $a>0$. Nevertheless, the same argument seems to not apply to shrinking Schouten solitons, once an important inequality degenerates when $\rho=1/2(n-1)$. We notice that, as a consequence of the results of Derdzinski and Maschler \cite{dema,ma}, Kähler Schouten solitons have constant scalar curvature, and hence are Ricci solitons, leaving nothing to prove.
	
	An important step in the proof of the results described above is deciding whether a given end is parabolic with respect to some weighted measure or not. Here we are able to provide the following:
	
	\begin{theorem}\label{thm1}
		Let $(M^n,g,f,\lambda)$ be a complete non-trivial gradient shrinking Schouten soliton. Then:
		\begin{enumerate}[i)]
			\item  $M$ has at most one $f$-non-parabolic end.
			\item If
			\begin{align}\label{aperture}
				R\leq\alpha<\frac{2(n-1)(n-2)\lambda}{n},
			\end{align}
			then all ends of $M$  are non-parabolic.
		\end{enumerate}
	\end{theorem}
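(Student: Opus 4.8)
The plan is to treat the two items by different methods: (i) via the Li--Tam theory of ends together with a weighted Bochner estimate, and (ii) via a volume-growth bound coming from the coarea formula and the divergence theorem applied to $\nabla f$. Both rest on two consequences of \eqref{schoutensoliton}: tracing it gives $\Delta f = n\lambda - \tfrac{n-2}{2(n-1)}R$, and taking its divergence (contracted Bianchi identity and $\mathrm{div}\,\nabla\nabla f = d\Delta f + \mathrm{Ric}(\nabla f,\cdot)$) collapses to the Schouten identity $\mathrm{Ric}(\nabla f,\cdot)=0$, whence $\tfrac12\nabla|\nabla f|^2 = \bigl(\tfrac{R}{2(n-1)}+\lambda\bigr)\nabla f$. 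With the scalar-curvature bounds of \cite{brgs} (in particular $R\ge 0$ for $\lambda>0$), this recovers that $f$ is proper, bounded below, free of critical points outside a compact set, of quadratic growth, with $|\nabla f|^2$ two-sidedly comparable to $f$ --- the facts underlying Theorem~\ref{thm1item1} --- and identifies the Bakry--\'Emery tensor of $\Delta_f := \Delta - \langle\nabla f,\nabla\cdot\rangle$ as $\mathrm{Ric}_f = \mathrm{Ric}+\nabla\nabla f = \bigl(\tfrac{R}{2(n-1)}+\lambda\bigr)g \ge \lambda g > 0$.

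For (i): assuming $M$ has two $f$-non-parabolic ends, the Li--Tam construction in the weighted setting (\cite{pLi0}, cf.\ \cite{MuWa1}) produces a bounded non-constant $u$ with $\Delta_f u = 0$ and $\int_M|\nabla u|^2 e^{-f}<\infty$. From the weighted Bochner formula and $\mathrm{Ric}_f\ge\lambda g$ one gets $\tfrac12\Delta_f|\nabla u|^2 \ge |\nabla\nabla u|^2 + \lambda|\nabla u|^2$; I would test this against $\phi^2 e^{-f}$, with $\phi = \eta(f/j)$ a cutoff built from level sets of $f$ (legitimate since $|\nabla f|^2$ is controlled by $f$), integrate by parts, absorb the Hessian term using $\bigl|\nabla|\nabla u|^2\bigr|\le 2|\nabla u|\,|\nabla\nabla u|$ and Cauchy--Schwarz, and let $j\to\infty$: the annular error is $O(j^{-1})\int_M|\nabla u|^2 e^{-f}\to 0$, so $u$ is constant --- a contradiction.

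For (ii): fix a large regular value $c$ so that each connected component $E$ of $\{f\ge c\}$ is an end with smooth compact boundary $\{f=c\}$, and set $V_E(t)=\mathrm{vol}(\{f\le t\}\cap E)$, $W_E(t)=\int_{\{f=t\}\cap E}|\nabla f|$, $C := \tfrac{\alpha}{n-1}+2\lambda$. The divergence theorem on $\{c\le f\le t\}\cap E$ gives $W_E(t)\ge(\min_M\Delta f)\,V_E(t)\ge c_1 V_E(t)$ with $c_1 := n\lambda-\tfrac{n-2}{2(n-1)}\alpha>0$ under \eqref{aperture}, while the coarea formula and $|\nabla f|^2\le Ct+O(1)$ on $\{f=t\}$ give $V_E'(t)\ge\tfrac{c_1}{Ct}V_E(t)$ for $t$ large. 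Integrating, $V_E(t)\gtrsim t^{\,c_1/C}$, and the exponent $c_1/C$ strictly exceeds $1$ \emph{exactly} when $\alpha<\tfrac{2(n-1)(n-2)\lambda}{n}$, i.e.\ precisely under \eqref{aperture}. Combined with the quadratic lower bound $f(x)\gtrsim\lambda\,d(x,p)^2$ (valid for shrinking Schouten solitons by \cite{brgs}, as for Ricci solitons \`a la Cao--Zhou), this yields $\mathrm{vol}(B_r\cap E)\gtrsim r^{2+\varepsilon}$, so $E$ is non-parabolic by the Grigor'yan--Karp--Varopoulos criterion; as every end of $M$ arises this way, all ends of $M$ are non-parabolic.

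The main obstacle is (i): producing the finite-energy bounded $f$-harmonic function from two non-parabolic ends and controlling the weighted integration-by-parts errors, which is exactly where the positivity $\mathrm{Ric}_f\ge\lambda g>0$ and the two-sided control of $|\nabla f|^2$ must be used --- this is the point at which the $\rho$-Einstein argument of \cite{brstrs} degenerates at $\rho=\tfrac1{2(n-1)}$. In (ii) the subtler inputs are the sharp comparison of $|\nabla f|^2$ with $f$ (from $\mathrm{Ric}(\nabla f)=0$ and \cite{brgs}) and the verification that the volume-growth exponent crosses the value $1$ precisely on the threshold of \eqref{aperture}.
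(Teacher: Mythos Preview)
Your argument for item (i) is correct and essentially identical to the paper's: both produce a bounded nonconstant $f$-harmonic function of finite weighted Dirichlet energy from two $f$-non-parabolic ends, feed it into the weighted Bochner formula using $\mathrm{Ric}_f\ge\lambda g>0$, and integrate against a cutoff to reach a contradiction. Whether the cutoff is radial or built from level sets of $f$ is immaterial.

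Your argument for item (ii), however, has a genuine gap in its final step. From the divergence theorem and coarea you correctly obtain $W_E(t)=\int_{\{f=t\}\cap E}|\nabla f|\ge c_1 V_E(t)$ and then $V_E(t)\gtrsim t^{c_1/C}$ with $c_1/C>1$ precisely when $\alpha<\tfrac{2(n-1)(n-2)\lambda}{n}$; that algebra is right. The problem is the inference ``$\mathrm{vol}(B_r\cap E)\gtrsim r^{2+\varepsilon}$, hence $E$ is non-parabolic.'' There is no such volume criterion: the classical results (Cheng--Yau, Grigor'yan, Varopoulos, Karp) only give the \emph{opposite} implication, namely that at-most-quadratic volume growth forces parabolicity. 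Super-quadratic volume growth alone does not imply non-parabolicity --- on a model manifold one can let the area function oscillate so that $\int^\infty\!dr/A(r)=\infty$ (parabolic) while $V(r)$ grows as fast as one likes. What \emph{does} work is a capacity estimate, and you already have the ingredient: using $u=\psi(f)$ as a test function and coarea, the capacity of $\{c\le f\le T\}\cap E$ is bounded above by $\bigl(\int_c^T dt/W_E(t)\bigr)^{-1}$, and since $W_E(t)\ge c_1 V_E(t)\gtrsim t^{c_1/C}$ with $c_1/C>1$, the integral $\int^\infty dt/W_E(t)$ converges and the end has positive capacity, hence is non-parabolic. So your route is salvageable, but not as written.

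By contrast, the paper's proof of (ii) is shorter and avoids capacity altogether: one checks directly that with $a=\dfrac{2(n-1)(n-2)\lambda-n\alpha}{2(\alpha+2\lambda)}>0$ the function $f^{-a}$ satisfies $\Delta f^{-a}\le 0$ (using $\Delta f\ge n\lambda-\tfrac{n-2}{2(n-1)}\alpha$ and the sharp bound $|\nabla f|^2\le\bigl(\tfrac{\alpha}{n-1}+2\lambda\bigr)f$ from Proposition~\ref{newest}), and since $f^{-a}\to 0$ at infinity by the quadratic growth of $f$, each end admits a positive superharmonic function tending to zero and is therefore non-parabolic by Proposition~\ref{imptoo1}. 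Your exponent $c_1/C$ crossing the value $1$ is exactly the condition $a>0$ in the paper's choice, so the two arguments detect the same threshold; the paper's barrier construction is simply a more direct way to exploit it.
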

	To prove the theorem above we use Li-Tam theory \cite{endharm}. We notice that item {\it 2.} is a analogous to a result due to Munteanu and Sesum \cite{MuSe}, which asserts that all ends of shrinking Ricci soliton are non-parabolic, if the scalar curvature satisfies $R\leq\alpha<(n-2)\lambda$ for some constant $\alpha$. We observe that the upper bound \eqref{aperture} is optimal, in the sense that if $R=\frac{2}{n}(n-1)(n-2)\lambda$ the result no longer holds true. A simple counter-example is the shrinking Schouten soliton $\mathbb{R}^{2}\times\mathbb{S}^{n-2}$, which is parabolic.  
		
	As mentioned above, the growth of $f$ is an important tool in the investigation of gradient shrinking Schouten soliton. A similar result for the gradient expanding Schouten soliton is not true, as the potential function of the rigid soliton $\mathbb{R}^{n-k}\times\mathbb{H}^{k}$, $k\geq1$, is not proper. However, the following growth holds in this case
	
	\begin{theorem}\label{grwth}
		Let $(M^n,g,f,\lambda)$ be a complete non-trivial expanding Schouten soliton. Then
		\begin{align}\label{grtbhv}
			\lambda r^2-cr+f(p)\leq\inf_{x\in\partial B_{r}(p)}f(x)\leq\frac{\lambda}{4}r^2+cr^{\frac{3}{2}}\sqrt{\ln r}+f_{0},
		\end{align}
		for $r>0$ large enough, $c>0$ independent of $x$, and $f_{0}=\max_{M}f$. Furthermore, there is $C>0$ so that
		\begin{align}\label{volgrwt}
			Vol(B_{r}(p))\leq Ce^{\sqrt{-4\lambda(n-1)}r}.
		\end{align}
	\end{theorem}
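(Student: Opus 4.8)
\emph{Proof strategy.} The plan is to reduce everything to the first-order structure of the soliton. I would start by recording two consequences of \eqref{schoutensoliton}: tracing it gives $\Delta f=n\lambda-\frac{n-2}{2(n-1)}R$, and taking its divergence while inserting the contracted second Bianchi identity, together with the special value $\rho=\frac{1}{2(n-1)}$, gives the first-order identity $\frac12\nabla|\nabla f|^2=(\frac{R}{2(n-1)}+\lambda)\nabla f$. From this last identity I would read off $Ric(\nabla f,\nabla f)=0$ and, crucially, $|\nabla(|\nabla f|)|=|\frac{R}{2(n-1)}+\lambda|$ where $\nabla f\neq 0$ (with the corresponding barrier inequality everywhere). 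Using the scalar curvature bounds of \cite{brgs} (which are two-sided, and in the expanding case give $R\geq 2(n-1)\lambda$), this makes $|\nabla f|$ globally Lipschitz with constant $-2\lambda$, i.e. $|\nabla f|(x)\leq|\nabla f|(p)-2\lambda\,r(x)$, $r(x)=d(p,x)$. I would also use from \cite{brgs} that $f$ is bounded above, so $f_0=\max_M f$ is finite.

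The left-hand inequality of \eqref{grtbhv} is then immediate. Along a unit-speed minimizing geodesic $\gamma$ from $p$ to $x\in\partial B_r(p)$, the Lipschitz bound yields $f(x)-f(p)=\int_0^r\langle\nabla f,\gamma'\rangle\,dt\geq-\int_0^r|\nabla f|(\gamma(t))\,dt\geq-\int_0^r(|\nabla f|(p)-2\lambda t)\,dt=\lambda r^2-|\nabla f|(p)\,r$, and taking the infimum over $x$ gives the claim with $c=|\nabla f|(p)$.

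The right-hand inequality of \eqref{grtbhv} is the delicate part, and where I expect the main difficulty to sit. The Lipschitz bound controls $|\nabla f|$ only from above, so it cannot by itself force $f$ to decay; the quadratic drop has to be extracted from $\mathrm{Hess}\,f=(\frac{R}{2(n-1)}+\lambda)g-Ric$. Along a minimizing geodesic $\gamma$ from $p$, writing $\phi=f\circ\gamma$, one has $\phi''=(\frac{R}{2(n-1)}+\lambda)-Ric(\gamma',\gamma')$; the first summand is strictly negative, and the Ricci term along a minimizing geodesic is controlled by the second-variation inequality $\int_0^r Ric(\gamma',\gamma')\psi^2\leq(n-1)\int_0^r(\psi')^2$ for compactly supported $\psi$, which forces $\phi$ to drop quadratically along suitable rays. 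Because the index-form estimate degrades near the far endpoint, I would not run this along a single geodesic but average the resulting inequality over the directions at $p$, equivalently over $\partial B_r(p)$, using the polynomial lower bound on $Vol(B_r(p))$ that the co-area formula produces from $\Delta f\leq-a_0<0$; a Cauchy--Schwarz step in this averaging, together with the at-most-exponential volume growth, is what I expect to produce both the $cr^{3/2}\sqrt{\ln r}$ error and the non-sharp coefficient $\frac\lambda4$ (on the rigid models $\mathbb{R}\times\mathbb{H}^{n-1}$ the infimum already grows like $\lambda r^2$, so $\frac\lambda4$ is genuinely lossy). I would also record the cleaner route available if one knew in addition that $\frac{R}{2(n-1)}+\lambda$ stays bounded away from $0$: the normalized descending gradient flow of $f$ from $p$ then has $|\nabla f|$ growing linearly (by the first-order identity) and hence $f$ decaying quadratically along it, and that flow line meets every $\partial B_r(p)$ because $f\to-\infty$ along it forces it out of every compact set. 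The obstacle is making the averaging argument quantitative enough to pin down, for each large $r$, an explicit point of $\partial B_r(p)$ where $f$ is as negative as asserted.

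For the volume bound \eqref{volgrwt} I would use a (weighted) Bishop--Gromov comparison: the Bakry--\'Emery tensor $Ric+\mathrm{Hess}\,f=(\frac{R}{2(n-1)}+\lambda)g$ is bounded below (since $R$ is bounded), and combining the corresponding volume comparison with the upper bound $f\leq f_0$ and the quadratic lower bound on $f$ already obtained (which bounds the weight $e^{-f}$ from below on $B_r(p)$) gives $Vol(B_r(p))\leq Ce^{\sqrt{-4\lambda(n-1)}\,r}$, the exponential rate being the one forced by the relevant curvature lower bound; if \cite{brgs} also supplies a two-sided pointwise Ricci bound, the ordinary Bishop--Gromov inequality applies directly.
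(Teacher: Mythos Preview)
Your lower bound is correct and matches the paper (which phrases it via $|\nabla\sqrt{f_0-f}|\le\sqrt{-\lambda}$), and your volume argument is essentially what the paper carries out directly in polar coordinates from $(\tfrac{J'}{J})'+\tfrac{1}{n-1}(\tfrac{J'}{J})^2+2\lambda-f''\le 0$ together with the linear gradient bound $|\nabla f|\le -2\lambda r+\tilde c$. One structural point: in the paper's argument the area/volume estimate is proved \emph{before} the upper bound on $f$ and is used as an input to it, not the other way around.

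For the upper bound in \eqref{grtbhv} your primary plan does not close. The index-form inequality bounds $\int_0^r Ric(\gamma',\gamma')\psi^2$ only from above, and through $\phi''=(\tfrac{R}{2(n-1)}+\lambda)-Ric(\gamma',\gamma')$ this feeds back as a \emph{lower} bound on $\phi=f\circ\gamma$, the wrong direction; averaging over $\partial B_r(p)$ does not flip that sign. Your own ``cleaner route'', however, already works and needs no extra hypothesis: since $R\le 0$ one always has $\tfrac{R}{2(n-1)}+\lambda\le\lambda<0$. Along the integral curve $\alpha$ of $\nabla f/|\nabla f|^2$ through a regular $p$, the relation $b'=\tfrac{R}{n-1}+2\lambda\le 2\lambda$ (with $b=|\nabla f\circ\alpha|^2$) gives $b(s)\ge b(0)+2|\lambda|\,|s|$ for $s<0$, so the arc length satisfies $L(s)\le|\lambda|^{-1}\sqrt{b(0)+2|\lambda||s|}$ and hence $|s|\ge\tfrac{|\lambda|}{2}L(s)^2-C$. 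Since $f(\alpha(s))=f(p)+s\to-\infty$, the curve meets every $\partial B_r(p)$ with $L\ge r$, yielding $\inf_{\partial B_r(p)}f\le\tfrac{\lambda}{2}r^2+C'$, in fact sharper than the stated bound. The paper instead follows the Munteanu--Wang/L.~F.~Wang scheme: with $u=f_0-f+\varepsilon$ it integrates $\Delta e^{2k\sqrt u}$ against $u^{k^2}$ over $B_p(r)$ via Green's identity, uses $-2\lambda(u-\varepsilon)\le|\nabla u|^2\le-4\lambda(u-\varepsilon)$ to derive $\varphi'(r)\ge\sqrt{-\lambda}\,k\,\varphi(r)$ for $\varphi(r)=\int_{B_p(r)}|\nabla u|\,u^{k^2-\frac12}e^{2k\sqrt u}$, then combines the resulting exponential boundary growth with the area bound $A(\partial B_p(r))\le Ce^{\sqrt{-4(n-1)\lambda}\,r}$ and optimizes at $k=\sqrt{cr/\ln r}$; that machinery is what produces the coefficient $\lambda/4$ and the $r^{3/2}\sqrt{\ln r}$ error.
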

	
	We observe that the volume growth of geodesic balls in the shrinking case is at least linear and at most polynomial \cite{brgs}. For the upper growth regarding the $f$-volume see Theorem 4.1 of \cite{weiwilie}.
	
	Now we describe our result on the topology at infinity of complete gradient expanding Schouten solitons. This is motivated by a result for expanding Ricci solitons, proved by Munteanu and Wang in \cite{MuWa2}, that we state below for the reader's convenience.
	
	\begin{theorem}[Theorem 1.4 of \cite{MuWa2}]\label{thRNcompact}
		Let $(M,g,f,\lambda)$ be a complete gradient expanding Ricci soliton. Assume that the scalar curvature satisfies $R\geq-(n-1)\lambda$, then either $M$ is connected at infinity or $M=\mathbb{R}\times N^{n-1}$, where $N$ is a compact Einstein manifold and $\mathbb{R}$ is the Gaussian expanding soliton. 
	\end{theorem}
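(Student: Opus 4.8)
We outline the approach, which is due to Munteanu and Wang and rests on Li--Tam theory for the drift Laplacian together with a weighted Bochner analysis. Argue along the dichotomy in the statement: suppose $M$ is not connected at infinity, so that it has at least two ends, and let $\Delta_f=\Delta-\langle\nabla f,\nabla\cdot\rangle$ be the drift Laplacian of the weighted manifold $(M,g,e^{-f}dV)$. The first task is to produce a nonconstant bounded $f$-harmonic function $u$, with $0\le u\le 1$, finite weighted Dirichlet energy $\int_M|\nabla u|^2e^{-f}\,dV<\infty$, and satisfying $u\to 1$ on one end and $u\to 0$ on another, so that the level sets of $u$ separate the ends. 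For this one first checks, using the soliton identities $R+\Delta f=n\lambda$ and $R+|\nabla f|^2-2\lambda f=\mathrm{const}$ together with the curvature hypothesis $R\ge-(n-1)\lambda$ (which controls the growth of $f$ and hence the weighted volume growth of the ends), that every end of $M$ is $f$-nonparabolic; the weighted Li--Tam construction then supplies such a $u$.

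The analytic core is the weighted Bochner formula. Since $\Delta_f u=0$ and $\mathrm{Ric}_f:=\mathrm{Ric}+\nabla\nabla f$ is a (negative) constant multiple of $g$ on an expanding soliton,
\[
\tfrac12\Delta_f|\nabla u|^2=|\nabla\nabla u|^2+\mathrm{Ric}_f(\nabla u,\nabla u).
\]
The decisive difficulty --- and the reason a curvature assumption is needed at all --- is that $\mathrm{Ric}_f(\nabla u,\nabla u)$ has the ``wrong'' sign, so no direct maximum principle applies. The plan is to multiply this identity by a cutoff $\phi^2$, integrate against $e^{-f}\,dV$, and let $\phi\uparrow 1$; the finite-energy property, a gradient estimate for bounded $f$-harmonic functions under a lower Bakry--\'Emery Ricci bound, and the (at most quadratic) growth of the potential $f$ coming from the scalar curvature hypothesis are combined to make the boundary integrals at infinity disappear. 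This yields a sharp integral relation between $\int_M|\nabla\nabla u|^2e^{-f}$ and $\int_M|\nabla u|^2e^{-f}$, and feeding in the refined Kato inequality $|\nabla\nabla u|^2\ge\frac{n}{n-1}\bigl|\nabla|\nabla u|\bigr|^2$ valid for harmonic functions isolates the equality case.

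In that borderline case $\nabla\nabla u$ is forced to be proportional to $\nabla u\otimes\nabla u$ --- equivalently, $|\nabla u|$ is constant on the level sets of $u$ and those level sets are totally geodesic, and $u$ has no critical points. A de Rham--type splitting argument then produces a metric product $M=\mathbb{R}\times N^{n-1}$ with $N$ a level set of $u$ and $u$ an affine function of the $\mathbb{R}$-coordinate; substituting this structure back into the soliton equation shows that $f$ restricts to the Gaussian potential on the $\mathbb{R}$-factor and that $(N,g_N)$ is itself a complete gradient expanding Ricci soliton. Since $M$ has at least two ends, $N$ cannot be noncompact (a product $\mathbb{R}\times N^{n-1}$ with $N$ connected and noncompact is connected at infinity), so $N$ is compact; and a compact expanding gradient Ricci soliton is trivial, hence $N$ is Einstein --- which is the exceptional alternative in the statement. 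If instead the borderline case fails, the integral relation is a strict inequality and leads to a contradiction, so $M$ is connected at infinity.

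The main obstacle is the integration-by-parts step: showing that $u$ and $|\nabla u|$ decay fast enough along the ends that no boundary term survives in the limit $\phi\uparrow 1$. This is precisely where the hypothesis $R\ge-(n-1)\lambda$ enters essentially, since it pins down the growth of $f$ (and hence the decay of $|\nabla u|$ along the ends); equality in this bound is realized exactly by the exceptional soliton $\mathbb{R}\times N$, indicating it is sharp. A secondary point that must be handled with care is the verification of $f$-nonparabolicity of the ends, without which the Li--Tam machinery does not get started.
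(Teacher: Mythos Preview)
The paper does not prove this statement. Theorem~\ref{thRNcompact} is quoted verbatim from Munteanu--Wang \cite{MuWa2} ``for the reader's convenience'' and is invoked as a black box at the very end of the proof of Theorem~\ref{thexpss}: once the Schouten argument forces $R=2(n-1)\lambda$, the soliton becomes an expanding Ricci soliton satisfying the hypothesis of Theorem~\ref{thRNcompact}, and the conclusion is simply cited. There is therefore no ``paper's own proof'' against which to compare your outline.

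That said, your sketch is broadly in the spirit of the Munteanu--Wang argument and indeed parallels the structure the present paper uses for its Schouten analogue (Theorem~\ref{thexpss}): rule out $f$-parabolic ends, produce a bounded finite-energy $f$-harmonic function via Li--Tam, and run a Bochner computation against cutoffs. Two caveats on the details of your outline. First, you invoke the refined Kato inequality $|\nabla\nabla u|^2\ge\frac{n}{n-1}|\nabla|\nabla u||^2$ ``valid for harmonic functions,'' but $u$ is only $f$-harmonic, so $\Delta u=\langle\nabla f,\nabla u\rangle\neq 0$ in general and that sharp constant is not available; in \cite{MuWa2} (and in this paper's proof of Theorem~\ref{thexpss}) the ordinary Kato inequality is combined with a weighted Poincar\'e inequality rather than a refined Kato. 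Second, the role of the hypothesis $R\ge-(n-1)\lambda$ is not just to control boundary terms in the integration by parts; it enters the Poincar\'e-versus-Bochner comparison (the weight in the Poincar\'e inequality has to dominate the Bakry--\'Emery curvature term), and in ruling out $f$-parabolic ends via the decay of $e^{f}$. Your description of the equality case and the splitting is correct in outline.
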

	
	
	The limitation imposed on the scalar curvature in the theorem above is used in two main parts of the proof. First, they rule out the existence of $f$-parabolic ends. This is done following arguments of \cite{pLiW,pLiW2}, providing volume decay for $f$-parabolic ends, where identities proper to Ricci solitons are critical in the proof. Second, they show rigidity when more than one $f$-non-parabolic end is considered. Later on, the first part of the argument was simplified in \cite{wan2} using the growth of $f$. Using estimates obtained in \cite{brgs}, we are able to proceed as in \cite{wan2} to eliminate the existence of $f$-parabolic ends of gradient expanding Schouten soliton and to carry out the proof of the rigidity when more than one $f$-non-parabolic end is considered, as in \cite{MuWa5}. Thus we have the following satisfactory result.
	
	\begin{theorem}\label{thexpss}
		Let $(M,g,f,\lambda)$ be a complete non-trivial expanding Schouten soliton. Then $M$ is either connected at infinity or $M$ is isometric to the product $\mathbb{R}\times N^{n-1}$, where $N$ is a compact Einstein manifold and $\mathbb{R}$ is the Gaussian expanding soliton.
	\end{theorem}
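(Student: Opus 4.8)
The plan is to follow the blueprint of Munteanu--Wang's proof of Theorem \ref{thRNcompact}, replacing the scalar-curvature hypothesis by the unconditional estimates available for Schouten solitons. We may assume $M$ has at least two ends, since otherwise $M$ is connected at infinity and there is nothing to prove, and we work with the drift Laplacian $\Delta_f=\Delta-\nabla f\cdot\nabla$, which is self-adjoint with respect to the weighted measure $e^{-f}\,dV_g$. Its Bochner identity, for an $f$-harmonic function $u$, reads
\begin{equation*}
\tfrac12\,\Delta_f|\nabla u|^2=|\nabla^2u|^2+Ric_f(\nabla u,\nabla u)=|\nabla^2u|^2+\Bigl(\tfrac{R}{2(n-1)}+\lambda\Bigr)|\nabla u|^2,
\end{equation*}
where we used the soliton equation \eqref{schoutensoliton} to rewrite $Ric_f=Ric+\nabla\nabla f$. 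This will be the main analytic tool.

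The first step is to show that $M$ has no $f$-parabolic end. Here I would use Theorem \ref{grwth}: by \eqref{grtbhv} the potential decays quadratically, and by \eqref{volgrwt} geodesic balls have at most exponential volume growth. Feeding these facts, together with the pointwise estimates of \cite{brgs}, into the argument of \cite{wan2} --- which is precisely the place where, for Ricci solitons, one needs $R\geq-(n-1)\lambda$ to secure the corresponding growth of $f$, whereas for Schouten solitons \eqref{grtbhv} is automatic --- one concludes that the $f$-volume of each end grows fast enough for that end to be $f$-non-parabolic. This is the source of the unconditional nature of the statement.

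The second step applies Li--Tam theory \cite{endharm}: since $M$ has at least two ends, all of them $f$-non-parabolic, there exists a non-constant bounded $f$-harmonic function $u$ on $M$ with finite weighted Dirichlet energy $\int_M|\nabla u|^2e^{-f}\,dV_g<\infty$, whose limits at infinity distinguish the ends. The core of the proof is then to promote the existence of such a $u$ to the splitting of $M$. Following \cite{MuWa5}, I would integrate the Bochner identity against cutoff functions built from the distance to a fixed point, use the finite-energy bound and a Kato-type inequality for $u$ to absorb the terms involving $|\nabla^2u|$, and control the sign-indefinite term $(\tfrac{R}{2(n-1)}+\lambda)|\nabla u|^2$ by means of the bounds on $R$ and on $|\nabla f|$ from \cite{brgs}. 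This forces all intermediate inequalities to be equalities, so that $M$ splits isometrically as $\mathbb{R}\times N$ with the line tangent to $\nabla u$. Because $M$ has more than one end, $N$ must be compact; restricting the soliton equation to the product then shows that $f$ depends only on the line factor, that $(N,g_N)$ is a compact expanding gradient Ricci soliton and hence Einstein, and that the line factor carries the Gaussian expanding soliton structure. In particular $M$ has exactly two ends.

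The step I expect to be the main obstacle is the rigidity argument: closing the integral estimates without any assumption on $R$, which requires choosing the cutoffs so that both the boundary terms and the curvature term vanish in the limit --- a delicate balancing of the growth of $|\nabla f|$ and of $f$ against the exponential volume growth \eqref{volgrwt} and against the a priori only bounded scalar curvature --- and then reading off, from the equality case, the parallel splitting direction. A secondary point requiring care is checking that the cross-section $N$ is geodesically complete and that the potential genuinely reduces to a function of the line factor, so that the product is honestly the Gaussian expanding soliton times a compact Einstein manifold.
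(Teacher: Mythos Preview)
Your three-step outline (no $f$-parabolic ends; Li--Tam produces a nonconstant finite-energy $f$-harmonic $u$; Bochner-type rigidity) matches the paper's architecture, but two of the steps are carried out differently in the paper, and in the rigidity step your description has a real gap.

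\textbf{Step 1.} The paper does not go through $f$-volume growth. It observes directly that $\Delta_f(e^{f})=(\Delta f)\,e^{f}=-\sigma\,e^{f}$ with $\sigma=\tfrac{n-2}{2(n-1)}R-n\lambda\geq -2\lambda>0$, so $e^{f}$ is a positive $f$-superharmonic barrier. The upper bound in Theorem~\ref{grwth} gives $\liminf_{x\to E(\infty)}e^{f(x)}=0$ on every end $E$, and Proposition~\ref{imptoo1} then forces $E$ to be $f$-non-parabolic. No volume argument is needed here.

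\textbf{Step 3.} The term $\bigl(\tfrac{R}{2(n-1)}+\lambda\bigr)|\nabla u|^2$ is not sign-indefinite: since $2(n-1)\lambda\le R\le 0$ it lies in $[2\lambda,\lambda]\,|\nabla u|^2$, hence is strictly negative. You cannot ``control'' it just with the pointwise bounds of \cite{brgs}; integrating Bochner against cutoffs alone yields an inequality with the wrong sign. The paper's key extra ingredient, which your sketch omits, is the \emph{weighted Poincar\'e inequality} $\int_M\sigma\,\phi^2e^{-f}\le\int_M|\nabla\phi|^2e^{-f}$ (obtained from the same identity $\Delta_f(e^f)=-\sigma e^f$ via \cite{pLiW3}). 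Plugging $\phi=|\nabla u|\psi$ into this Poincar\'e inequality and combining with Bochner/Kato plus Green's identity produces
\[
\int_M\Bigl(\sigma+\tfrac{R}{2(n-1)}+\lambda\Bigr)|\nabla u|^2\psi^2e^{-f}\le\int_M|\nabla u|^2|\nabla\psi|^2e^{-f},
\]
and now $\sigma+\tfrac{R}{2(n-1)}+\lambda=\tfrac{R}{2}-(n-1)\lambda\ge 0$ by the lower bound $R\ge 2(n-1)\lambda$. The right-hand side tends to $0$ simply because $\int_M|\nabla u|^2e^{-f}<\infty$; there is no delicate balancing of $|\nabla f|$, $f$, and volume growth as you anticipate.

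\textbf{Endgame.} The paper does not read off a splitting directly from equality in Kato. The vanishing integral forces $R\equiv 2(n-1)\lambda$, whence \eqref{schoutensoliton} becomes the Ricci soliton equation $Ric+\nabla^2f=2\lambda g$ with constant scalar curvature, and the conclusion is obtained by \emph{invoking} Theorem~\ref{thRNcompact} for that Ricci soliton. Your direct-splitting route might be made to work, but it is not what the paper does and would require a separate argument.
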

	
	Compare Theorem \ref{thexpss} with {\it Corollary 1.1.} of \cite{MuWa1} and {\it question 1.1} of \cite{MuWa5}.	We would like to point out that, according to the authors of \cite{MuWa5}, it is not known if the assumption on the scalar curvature is superfluous in Theorem \ref{thRNcompact}. We observe that the scalar curvature of a complete gradient Schouten soliton always satisfies the bounds $2(n-1)\lambda\leq R\leq0$.
		
	This paper is organized as follows. In Section \ref{preliminariesresults}, we collect important preliminary results and definitions for our proofs, and it is divided into two subsections. The first subsection contains a few results concerning gradient Schouten solitons, including estimates for the scalar and potential function. We also present an improved estimate for $\vert\nabla f\vert^2$ in terms of any bounds on the scalar curvature. The second subsection collects the main definitions concerning Li-Tam theory and criteria for parabolicity used to prove our main results. In Section \ref{shrsch}, we prove our results concerning ends of gradient shrinking Schouten solitons, namely, Theorem \ref{thm1item1} and Theorem \ref{thm1}. In Section \ref{expsch} we prove the theorems on gradient expanding Schouten solitons, that is, Theorem \ref{grwth} and Theorem \ref{thRNcompact}. To obtain these results, we prove a weighted Poincaré inequality for the measure $e^{-f}dV$ and a lower bound to the bottom spectrum of $\Delta_{f}$. We also show the potential function decays at least quadratically.
	
	\section{Preliminary Results}\label{preliminariesresults}
	
	The goal of this section is twofold. In Subsection \ref{sub1}, we recall some results on Schouten solitons from \cite{brgs, catino, catino1}, and we present a gradient estimate for Schouten solitons that depend on bounds for its scalar curvature. In Subsection \ref{sub2}, we collect important definitions and results concerning parabolicity on smooth metric measure spaces found in \cite{MuWa1,MuWa2,MuWa4,MuWa5,endharm,wan1,wan2}.
	
	\subsection{Basics on Schouten Solitons}\label{sub1}
	
	The following identities obtained in \cite{catino, catino1} are essential in the study of Schouten solitons:
	
	\begin{proposition}[\cite{catino, catino1}]\label{thmcatino}
		If $(M^n,g,f,\lambda)$ is a gradient Schouten soliton, then
		\begin{equation*}
			\begin{split}
				\Delta f&=n\lambda-\frac{n-2}{2(n-1)}R,\\
				\mbox{\emph{Ric}}(\nabla f&,X)=0, \forall \,X\in\mathfrak{X}(M),\\
				\langle\nabla f,\nabla R\rangle+&\left(\frac{R}{n-1}+2\lambda\right)R=2\vert\mbox{\emph{Ric}}\vert^2.
			\end{split}
		\end{equation*}
	\end{proposition}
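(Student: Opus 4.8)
The plan is to derive all three identities from the defining equation \eqref{schoutensoliton} together with two standard tools: the twice-contracted second Bianchi identity $\operatorname{div}\operatorname{Ric}=\tfrac12\,dR$, and the commutation identity $\operatorname{div}(\nabla\nabla f)=d(\Delta f)+\operatorname{Ric}(\nabla f,\cdot)$. Each of the three identities should come out of a single contraction or divergence applied to \eqref{schoutensoliton}, carried out in the order in which they are listed.

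For the first identity I would take the metric trace of \eqref{schoutensoliton}. Since $\operatorname{tr}_g\operatorname{Ric}=R$, $\operatorname{tr}_g(\nabla\nabla f)=\Delta f$ and $\operatorname{tr}_g g=n$, this gives $R+\Delta f=\tfrac{n}{2(n-1)}R+n\lambda$, and collecting the terms in $R$ produces $\Delta f=n\lambda-\tfrac{n-2}{2(n-1)}R$.

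For the second identity I would take the divergence of \eqref{schoutensoliton}. The left-hand side becomes $\tfrac12\,dR+d(\Delta f)+\operatorname{Ric}(\nabla f,\cdot)$ by the two tools above, while the divergence of $(\tfrac{R}{2(n-1)}+\lambda)g$ is $\tfrac{1}{2(n-1)}\,dR$. Substituting $d(\Delta f)=-\tfrac{n-2}{2(n-1)}\,dR$ from the first identity, all the $dR$ contributions cancel identically; this cancellation is precisely what singles out the Schouten value $\rho=\tfrac{1}{2(n-1)}$ among $\rho$-Einstein solitons, and for general $\rho$ one would instead be left with a multiple of $dR$. What remains is $\operatorname{Ric}(\nabla f,\cdot)=0$.

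For the third identity I would compute the divergence of the one-form $\omega:=\operatorname{Ric}(\nabla f,\cdot)$, which vanishes by the second identity. Expanding, $\operatorname{div}\omega=\langle\operatorname{div}\operatorname{Ric},\nabla f\rangle+\langle\operatorname{Ric},\nabla\nabla f\rangle=\tfrac12\langle\nabla R,\nabla f\rangle+\langle\operatorname{Ric},\nabla\nabla f\rangle$. I would then use \eqref{schoutensoliton} in the form $\nabla\nabla f=(\tfrac{R}{2(n-1)}+\lambda)g-\operatorname{Ric}$ to get $\langle\operatorname{Ric},\nabla\nabla f\rangle=(\tfrac{R}{2(n-1)}+\lambda)R-|\operatorname{Ric}|^2$; setting $\operatorname{div}\omega=0$ and multiplying by $2$ gives $\langle\nabla f,\nabla R\rangle+(\tfrac{R}{n-1}+2\lambda)R=2|\operatorname{Ric}|^2$. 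I do not expect a genuine obstacle here: the only delicate point is keeping the curvature sign conventions straight in the commutation identity for $\operatorname{div}(\nabla\nabla f)$, and in any event these identities may simply be quoted from \cite{catino,catino1}.
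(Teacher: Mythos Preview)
Your derivation is correct in every step: the trace yields the first identity, the divergence of \eqref{schoutensoliton} combined with the contracted Bianchi identity and the commutation formula $\operatorname{div}(\nabla\nabla f)=d(\Delta f)+\operatorname{Ric}(\nabla f,\cdot)$ gives the second (with the $dR$ cancellation occurring exactly at $\rho=\tfrac{1}{2(n-1)}$, as you note), and taking the divergence of the vanishing one-form $\operatorname{Ric}(\nabla f,\cdot)$ produces the third.

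There is nothing to compare against in the paper itself: Proposition~\ref{thmcatino} is stated there without proof and attributed to \cite{catino,catino1}. Your argument is precisely the standard one used in those references (see in particular Lemma~2.1 and Lemma~3.2 of \cite{catino1}), so you are supplying the proof the paper merely cites.
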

	
	These identities have been used by Catino and Mazzieri to establish rigidity results for these solitons. See Section 5 of \cite{catino}. These identities have also been used in the proof of the following estimates for $f$ and $R$ on a complete non-steady Schouten soliton.
	\begin{theorem}[\cite{brgs}]\label{th11val33}
		Let $(M^n,g,f,\lambda)$, $\lambda\neq0$, be a complete non-compact Schouten soliton with $f$ non-constant. If $\lambda>0$ $($respectively $\lambda<0$$)$, then the potential function $f$ attains a global minimum $($resp. maximum$)$ and is unbounded above $($resp. below$)$. Furthermore
		\begin{equation}\label{111valter33}
			0\leq \lambda R\leq 2(n-1)\lambda^2,
		\end{equation}
		\begin{equation}\label{112valter33}
			2\lambda(f-f_0)\leq \vert\nabla f\vert^2\leq 4\lambda(f-f_0),
		\end{equation}
		where $\displaystyle f_0=\min_Mf$ $($resp. $ \displaystyle f_0=\max_Mf$$)$.
	\end{theorem}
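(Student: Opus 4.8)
The plan is to extract the whole statement from the three identities of Proposition~\ref{thmcatino}, exploited through two of their consequences. First, since $\mathrm{Ric}(\nabla f,\cdot)\equiv 0$, at each point where $\nabla f\neq 0$ the Ricci tensor is carried by the hyperplane $(\nabla f)^{\perp}$, and since $\{\nabla f=0\}$ has empty interior (the soliton system renders $(g,f)$ real-analytic in harmonic coordinates), this gives the pointwise improvement
\[
|\mathrm{Ric}|^{2}\ \geq\ \frac{R^{2}}{n-1}
\]
of the universal bound $|\mathrm{Ric}|^{2}\geq R^{2}/n$. Second, feeding $\mathrm{Ric}(\nabla f,\cdot)\equiv 0$ and \eqref{schoutensoliton} into $\tfrac12\nabla|\nabla f|^{2}=\nabla_{\nabla f}\nabla f$ yields the transport identity
\[
\nabla|\nabla f|^{2}=\Bigl(\tfrac{R}{n-1}+2\lambda\Bigr)\nabla f .
\]
I describe the shrinking case $\lambda>0$; the expanding case runs in parallel with the inequalities reversed.

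\emph{Scalar curvature bounds.} Substituting $|\mathrm{Ric}|^{2}\geq R^{2}/(n-1)$ into the third identity of Proposition~\ref{thmcatino} gives
\[
\langle\nabla f,\nabla R\rangle\ \geq\ \frac{R\,\bigl(R-2(n-1)\lambda\bigr)}{n-1},
\]
so at any interior extremum of $R$ (where $\nabla R=0$) the value of $R$ must lie in $[0,2(n-1)\lambda]$. The work is to promote this to a global statement on the complete non-compact $M$ with no curvature hypotheses --- a maximum principle at infinity. For $R\geq 0$ I would follow B.-L.\ Chen's strategy for Ricci solitons and pass to the self-similar Ricci--Bourguignon flow generated by the soliton: for the Schouten value $\rho=\tfrac{1}{2(n-1)}$ the diffusion term in the evolution of the scalar curvature cancels identically, the equation degenerating to the reaction equation $\partial_{t}R=2|\mathrm{Ric}|^{2}-\tfrac{R^{2}}{n-1}\geq\tfrac{R^{2}}{n-1}\geq 0$, so that $R$ is monotone along the associated ancient solution; together with the scaling of $R$ under the flow this forces $R\geq 0$. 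Once $R$ is bounded below, $\mathrm{Ric}$ is bounded below, the Omori--Yau maximum principle becomes available, and applying it to $R$ together with the displayed transport inequality yields $\sup R\leq 2(n-1)\lambda$; hence $0\leq\lambda R\leq 2(n-1)\lambda^{2}$.

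\emph{Growth of $f$, the extremum, and the gradient estimate.} With $0\leq R\leq 2(n-1)\lambda$ the soliton equation along a unit-speed minimal geodesic $\gamma$ from a fixed point $p$ reads $\mathrm{Ric}(\gamma',\gamma')=\bigl(\tfrac{R}{2(n-1)}+\lambda\bigr)-(f\circ\gamma)''$ with $\tfrac{R}{2(n-1)}+\lambda\geq\lambda>0$; inserting this into the standard second-variation bound $\int_{0}^{r}\mathrm{Ric}(\gamma',\gamma')\,dt\leq C(n)$ (uniform in $r$, via a tent test function, as in Cao--Zhou) gives $(f\circ\gamma)'(r)\geq\lambda r-C$ and hence $f(x)\geq\tfrac{\lambda}{2}\,d(p,x)^{2}-C\,d(p,x)+f(p)$. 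Thus $f$ is proper and bounded below, so it attains a global minimum $f_{0}$ at some $p_{0}$ (all sublevel sets being compact) and is unbounded above. For the gradient estimate, the scalar bound puts the factor $\tfrac{R}{n-1}+2\lambda$ in the transport identity into $[2\lambda,4\lambda]$, and that identity says $\nabla|\nabla f|^{2}$ is everywhere proportional to $\nabla f$; since the regular level sets of the proper function $f$ are connected (a property of shrinking solitons), $|\nabla f|^{2}$ is a function of $f$, say $|\nabla f|^{2}=\Phi(f)$ on $[f_{0},\infty)$ with $\Phi(f_{0})=0$, $\Phi\geq 0$ and $\Phi'=\tfrac{R}{n-1}+2\lambda\in[2\lambda,4\lambda]$ off critical values; integrating from $f_{0}$ then gives $2\lambda(f-f_{0})\leq|\nabla f|^{2}\leq 4\lambda(f-f_{0})$ at once. (The upper bound can also be had with no control on the level sets, by running the backward $\nabla f$-flow on $|\nabla f|^{2}-4\lambda(f-f_{0})$, whose gradient is the nonpositive multiple $\tfrac{R-2(n-1)\lambda}{n-1}\nabla f$ of $\nabla f$, and using that such a flow line stays in a compact sublevel set along which $|\nabla f|^{2}\to 0$ on a subsequence.) In the expanding case the mirror argument gives $f(x)\leq f(p)-c\,d(p,x)^{2}$ far out, so $f$ is unbounded below, while the transport identity makes $|\nabla f|^{2}$ a \emph{decreasing} function of $f$, which is incompatible with $\sup_{M}f=+\infty$; hence $f$ is bounded above and attains its maximum, and the same integration yields the stated inequalities with $f_{0}=\max_{M}f$.

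\emph{Main obstacle.} The genuine difficulty is the scalar curvature step: turning the pointwise/transport inequality for $R$ into two-sided bounds on a complete manifold without a priori curvature control --- precisely the obstruction that makes $R\geq 0$ delicate for Ricci solitons. The plan routes this through the self-similar flow, where the Schouten normalization is decisively helpful because it annihilates the diffusion in the evolution of $R$, reducing the matter to a reaction (ODE-type) comparison along the ancient solution; the careful, localized version of this argument (to avoid assuming bounded curvature) is what I expect to be the technical heart. A secondary point needing care is the connectedness of the regular level sets of $f$ used above, together with the mild interdependence between the growth and gradient estimates; once the scalar bound and that level-set structure are in place, the remaining estimates are comparatively soft.
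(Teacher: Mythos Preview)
Your outline is quite different from the route described here (the theorem is quoted from \cite{brgs}, and the paper sketches that proof via Proposition~\ref{strategylemma}). There everything is extracted from a single second-order ODE along maximal integral curves $\alpha$ of $\nabla f/|\nabla f|^{2}$: with $b(s)=|\nabla f(\alpha(s))|^{2}$ one has $b'=\tfrac{R}{n-1}+2\lambda$ (your transport identity, reparametrised by $f$), and combining this with the same improved Cauchy--Schwarz $|\mathrm{Ric}|^{2}\ge R^{2}/(n-1)$ and the third identity of Proposition~\ref{thmcatino} yields
\[
bb''-(b')^{2}+6\lambda b'-8\lambda^{2}\ \ge\ 0,\qquad\text{i.e.}\qquad bb''\ \ge\ (b'-2\lambda)(b'-4\lambda).
\]
A direct phase-plane analysis of this inequality on the maximal interval forces $2\lambda^{2}\le\lambda b'\le 4\lambda^{2}$, which is exactly \eqref{111valter33}; integrating $b'$ along the curve then gives \eqref{112valter33}. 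No maximum principle at infinity, no self-similar flow, and no level-set topology enter.

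Two of the steps you flag as ``needing care'' are in fact genuine gaps. First, regular level sets of $f$ are not connected in general: the rigid shrinking Schouten soliton $\mathbb{R}\times N^{n-1}$ with $N$ compact Einstein, $\mathrm{Ric}_{N}=2\lambda g_{N}$, $f(t,x)=\lambda t^{2}$, has $\{f=c\}$ equal to two copies of $N$. Your parenthetical backward-flow substitute recovers only the upper bound of \eqref{112valter33}; for the lower one you would still need that every backward trajectory limits to a point with $f=f_{0}$, which is exactly what the ODE analysis supplies but which you have not established. Second, the Omori--Yau step is incomplete: from $|\nabla R|(x_{k})\to 0$ you need $\langle\nabla f,\nabla R\rangle(x_{k})\to 0$, yet $|\nabla f|(x_{k})$ will typically diverge (by the very estimate you are proving), so the product need not tend to zero. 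Finally, the ancient self-similar Schouten flow you invoke is built from the flow of $\nabla f$, whose completeness is not known a priori (and the underlying PDE is degenerate at $\rho=\tfrac{1}{2(n-1)}$, cf.\ the introduction). If one localises your reaction-equation idea to a single trajectory and reparametrises by $f$, one lands precisely on the ODE of Proposition~\ref{strategylemma}; so the rigorous version of your plan \emph{is} the argument in \cite{brgs}.
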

	
	The theorem above can be seen as an analogue of Hamilton's identity for Schouten solitons. These inequalities have been used in \cite{brgs} to provide control on the growth of the potential function and the volume of geodesic balls. They were also used in \cite{brgs} to prove the following growth estimates on $f$ (see also Theorem 4.2 of \cite{MuWa3}).
	
	\begin{theorem}[\cite{brgs}]\label{thm12vb}
		Let $(M^n, g, f, \lambda)$ be a complete non-compact shrinking Schouten soliton with $f$ non-constant, and let $\displaystyle f_0=\min_{M}f$ and $p_0\in M$. Then
		$$\frac{\lambda}{4}(d(p)-C_1)^2+f_0\leq f(p)\leq\lambda(d(p)+C_2)^2+f_0,$$
		where $C_1$ and $C_2$ are positive constants depending on $\lambda$ and the geometry of the soliton on the unit ball $B_{p_0}(1)$ and $d(p)=d(p,p_0)>2$.
	\end{theorem}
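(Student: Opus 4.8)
The plan is to extract the stated quadratic growth from the two-sided gradient estimate \eqref{112valter33} of Theorem~\ref{th11val33}. Put $u:=f-f_0\ge0$ and $w:=\sqrt u$. Since $|\nabla u|=|\nabla f|$ and $2\lambda u\le|\nabla f|^2\le4\lambda u$, on the open set $\{f>f_0\}$ one has $|\nabla w|^2=|\nabla u|^2/(4u)\in[\lambda/2,\lambda]$, i.e.\ $\sqrt{\lambda/2}\le|\nabla w|\le\sqrt\lambda$ there. The smooth functions $w_\varepsilon:=\sqrt{u+\varepsilon}-\sqrt\varepsilon$ satisfy $|\nabla w_\varepsilon|=|\nabla f|/(2\sqrt{u+\varepsilon})\le\sqrt\lambda$ on all of $M$ and converge pointwise to $w$, so $w$ is globally $\sqrt\lambda$-Lipschitz on $M$. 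Reading \eqref{112valter33} at $p_0$ gives $u(p_0)\le|\nabla f(p_0)|^2/(2\lambda)$, so $w(p_0)$ is controlled by $\lambda$ and the data at $p_0$. The upper bound is then immediate: since $w$ is $\sqrt\lambda$-Lipschitz, $w(p)\le w(p_0)+\sqrt\lambda\,d(p)$ with $d(p):=d(p,p_0)$, and squaring gives $f(p)-f_0=w(p)^2\le\lambda\bigl(d(p)+C_2\bigr)^2$ with $C_2:=w(p_0)/\sqrt\lambda$.

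For the lower bound --- the harder half --- I would run the negative gradient flow of $w$. Fix $p$ with $f(p)>f_0$ and let $\sigma$ solve $\dot\sigma=-\nabla w(\sigma)$, $\sigma(0)=p$, on its maximal interval inside $\{f>f_0\}$. Then $\tfrac{d}{dt}w(\sigma)=-|\nabla w|^2\le-\lambda/2$, so $w$ decreases at a definite rate, and since $w\ge0$ this interval is finite, say $[0,t^\ast)$ with $t^\ast\le2w(p)/\lambda$. The flow has bounded speed $|\dot\sigma|=|\nabla w|\le\sqrt\lambda$, so $\sigma|_{[0,t^\ast)}$ is a curve of length $\le\sqrt\lambda\,t^\ast\le\tfrac2{\sqrt\lambda}w(p)$; being a finite-length curve in a complete manifold, it is Cauchy at its endpoint, so $\sigma(t)\to q^\ast\in M$ with $w(q^\ast)=0$, i.e.\ $f(q^\ast)=f_0$ (such minimum points exist by Theorem~\ref{th11val33}). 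Hence $d\bigl(p,\{f=f_0\}\bigr)\le\tfrac2{\sqrt\lambda}\sqrt{f(p)-f_0}$, equivalently $f(p)-f_0\ge\tfrac\lambda4\,d\bigl(p,\{f=f_0\}\bigr)^2$. Combining this with the triangle inequality $d\bigl(p,\{f=f_0\}\bigr)\ge d(p)-C_1$, where $C_1$ bounds the distance from $p_0$ to the (compact) minimum set of $f$ and is itself controlled via the same flow estimate applied at $p_0$, yields $f(p)-f_0\ge\tfrac\lambda4\bigl(d(p)-C_1\bigr)^2$; the remaining bounded range of $d(p)$ is handled routinely.

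The step I expect to be the real obstacle is this last one: justifying that the negative gradient flow of $w$ --- smooth only off the minimum set $\{f=f_0\}$, whose size is not controlled a priori --- is defined on a finite interval and limits \emph{onto} that set. This is exactly where completeness of $(M,g)$ and the two-sided pinching $\sqrt{\lambda/2}\le|\nabla w|\le\sqrt\lambda$ enter, and it is what lets one avoid assuming from the outset that $f$ is proper; keeping track of the dependence of $C_1$ on the geometry of the minimum set relative to $p_0$ is the accompanying bookkeeping to be careful about. A more computational alternative for the lower bound, in the spirit of Cao--Zhou for Ricci solitons, would integrate the soliton identity $\nabla\nabla f(\gamma',\gamma')=\bigl(\tfrac{R}{2(n-1)}+\lambda\bigr)-\mathrm{Ric}(\gamma',\gamma')$ twice along a minimizing geodesic $\gamma$ from a minimum point of $f$ to $p$, using $R\ge0$ from \eqref{111valter33}; there the difficulty shifts to bounding $\int\mathrm{Ric}(\gamma',\gamma')$ via the index form, which needs curvature control near the far endpoint (for instance $\mathrm{Ric}\ge0$, or the identities of Proposition~\ref{thmcatino} together with $R\le2(n-1)\lambda$). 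The gradient-flow route above avoids this and seems the cleaner path.
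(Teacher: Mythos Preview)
The paper does not actually prove Theorem~\ref{thm12vb}; it merely quotes it from \cite{brgs}, remarking that the proof there uses the gradient estimate \eqref{112valter33} (and pointing to Theorem~4.2 of \cite{MuWa3} as a parallel source). So there is no in-paper argument to compare your proposal against, only the hint that \eqref{112valter33} is the key input.

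Your upper bound is clean and correct: the Lipschitz bound $|\nabla\sqrt{f-f_0}|\le\sqrt\lambda$ follows directly from the right-hand side of \eqref{112valter33}, and integrating along a minimizing geodesic from $p_0$ to $p$ yields $f(p)-f_0\le\lambda(d(p)+C_2)^2$ with $C_2=\sqrt{(f(p_0)-f_0)/\lambda}$, which is controlled by $|\nabla f(p_0)|$ via the left-hand side of \eqref{112valter33}. This half is fine.

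The lower bound, however, has a genuine gap. Your gradient-flow argument correctly establishes
\[
f(p)-f_0\;\ge\;\frac{\lambda}{4}\,d\bigl(p,\{f=f_0\}\bigr)^2,
\]
but the passage from $d(p,\{f=f_0\})$ to $d(p,p_0)-C_1$ via the triangle inequality requires
\[
C_1\;\ge\;\sup_{q\in\{f=f_0\}}d(p_0,q),
\]
i.e.\ a bound on the distance from $p_0$ to the \emph{farthest} point of the minimum set. The ``same flow estimate applied at $p_0$'' only yields a bound on $\inf_{q}d(p_0,q)$, the distance to the \emph{nearest} minimum point. You also assert the minimum set is compact, but that compactness is a \emph{consequence} of the very lower bound you are proving (if $f(q)=f_0$ then $0\ge\tfrac{\lambda}{4}(d(q)-C_1)^2$ forces $d(q)\le C_1$), so invoking it here is circular. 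Nothing in \eqref{112valter33} alone prevents the minimum set from being unbounded a~priori, and in the rigid examples $\mathbb{R}^{n-k}\times N^k$ the minimum set is $\{0\}\times N$, whose diameter is not read off from data at a single point.

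The route you label the ``more computational alternative'' --- integrating $f''(\gamma(s))=\bigl(\tfrac{R}{2(n-1)}+\lambda\bigr)-\mathrm{Ric}(\gamma',\gamma')$ along a minimizing geodesic and bounding $\int\phi^2\,\mathrm{Ric}(\gamma',\gamma')$ by $(n-1)\int(\phi')^2$ via the second variation, exactly as in Cao--Zhou --- is not merely an alternative but is the argument that actually closes: it produces the lower bound directly in terms of $d(p,p_0)$, with $C_1$ depending only on $n$, $\lambda$, and curvature bounds on $B_{p_0}(1)$, and never needs to know anything about the minimum set. The bounds $0\le R\le 2(n-1)\lambda$ from \eqref{111valter33} are exactly what make the boundary terms near $p$ harmless. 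This is almost certainly what \cite{brgs} does, and it is the approach you should pursue; the gradient-flow route, while elegant for the upper bound, does not by itself deliver the lower one.
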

	
	The proof of Theorem \ref{th11val33} follows from a careful analysis of an ordinary differential inequality that we pass to describe. Let $p\in M$ be a regular point of $f$ and $\alpha_{p}:(\omega_{1}(p),\omega_{2}(p))\rightarrow M$ the maximal integral curve of $\frac{\nabla f}{\vert\nabla f\vert^2}$ through $p$. We simply write $\alpha:(\omega_{1},\omega_{2})\rightarrow M$, when the dependence on $p$ is irrelevant. The importance of these curves comes from the fact that $f\circ\alpha$ is an affine function of $s$. In fact, once $(f\circ\alpha)'(s)=1,\ \forall s\in(\omega_{1},\omega_{2})$, we readily have $(f\circ\alpha)(s_{2})-(f\circ\alpha)(s_{1})=s_{2}-s_{1}$, for any $[s_{1},s_{2}]\subset(\omega_{1},\omega_{2})$, proving the assertion. Using Proposition \ref{thmcatino}, the differential inequality is given in the following Proposition, proved in \cite{brgs}
	
	\begin{proposition}[\cite{brgs}]\label{strategylemma}
		Let $(M^n,g,f,\lambda)$, $\lambda\neq0$, be a Schouten soliton with $f$ nonconstant and $\alpha(s)$, $s\in(\omega_{1},\omega_{2})$, a maximal integral curve of $\frac{\nabla f}{\vert\nabla f\vert^2}$. The function $b:(\omega_{1},\omega_{2})\rightarrow\mathbb{R}$, defined by
		\begin{align}\label{composition}
			b(s)=\vert\nabla f(\alpha(s))\vert^2,
		\end{align}
		satisfies the differential inequality
		\begin{equation}\label{MainIneq}
			bb''-(b')^2+6\lambda b'-8\lambda^2\geq0,
		\end{equation}
		where $b'$ and $b''$ are the first and the second derivative of $b$ with respect to $s$.
	\end{proposition}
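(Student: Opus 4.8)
\medskip
\noindent\textbf{Proof proposal.} The plan is to write both $b'$ and $b''$ explicitly in terms of the scalar curvature along the integral curve, and then to close the inequality using a Cauchy--Schwarz bound for $|\mathrm{Ric}|^2$ that is available precisely because $\nabla f$ is a null direction of the Ricci tensor. Throughout, the computations are performed at the regular point $\alpha(s)$, where $\nabla f\neq 0$, and we use that $\alpha'(s)=\nabla f/|\nabla f|^2$.

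First I would compute $b'$. By the chain rule, $b'(s)=\langle\nabla|\nabla f|^2,\alpha'(s)\rangle=\langle\nabla|\nabla f|^2,\nabla f\rangle/|\nabla f|^2=2\,\nabla\nabla f(\nabla f,\nabla f)/|\nabla f|^2$. Substituting the soliton equation \eqref{schoutensoliton} and using $\mathrm{Ric}(\nabla f,\nabla f)=0$ from Proposition \ref{thmcatino}, the right-hand side collapses to
\[
b'(s)=\frac{R}{n-1}+2\lambda ,
\]
evaluated along $\alpha$. Differentiating once more, $b''(s)=\dfrac{1}{n-1}\langle\nabla R,\alpha'(s)\rangle=\dfrac{\langle\nabla R,\nabla f\rangle}{(n-1)|\nabla f|^2}$, and the third identity of Proposition \ref{thmcatino} rewrites this as
\[
b''(s)=\frac{1}{(n-1)|\nabla f|^2}\Big(2|\mathrm{Ric}|^2-\big(\tfrac{R}{n-1}+2\lambda\big)R\Big).
\]
Since $b=|\nabla f|^2$, multiplying gives $b b''=\dfrac{1}{n-1}\Big(2|\mathrm{Ric}|^2-\big(\tfrac{R}{n-1}+2\lambda\big)R\Big)$.

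Next I would bound $|\mathrm{Ric}|^2$ from below. Because $\mathrm{Ric}(\nabla f,\cdot)=0$ at $\alpha(s)$, the nonzero vector $\nabla f$ is an eigenvector of $\mathrm{Ric}$ with eigenvalue $0$; hence the remaining $n-1$ eigenvalues add up to $R$, and Cauchy--Schwarz yields $|\mathrm{Ric}|^2\ge R^2/(n-1)$. Inserting this into the expression for $bb''$ gives
\[
bb''\ \ge\ \frac{R^2}{(n-1)^2}-\frac{2\lambda R}{n-1}.
\]
Finally, setting $w=R/(n-1)$ so that $b'=w+2\lambda$, a direct expansion shows $(b')^2-6\lambda b'+8\lambda^2=w^2-2\lambda w$, which is exactly the lower bound just obtained for $bb''$. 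Hence $bb''-(b')^2+6\lambda b'-8\lambda^2\ge 0$.

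The only genuinely non-routine ingredient is the inequality $|\mathrm{Ric}|^2\ge R^2/(n-1)$; everything else is bookkeeping with the three identities of Proposition \ref{thmcatino} together with the affine behavior of $f\circ\alpha$. The point to be careful about is that all steps take place where $\nabla f\neq 0$, which holds on the image of a maximal integral curve of $\nabla f/|\nabla f|^2$, so the null-eigenvector argument is legitimate and no division by zero occurs.
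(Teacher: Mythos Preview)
Your proof is correct. The paper does not actually include a proof of this proposition---it is quoted from \cite{brgs}---but your argument uses exactly the identities from Proposition~\ref{thmcatino} that the paper flags as the route to \eqref{MainIneq}, and your computation of $b'$ matches the paper's own computation in \eqref{eq1}; the remaining step, namely the eigenvalue bound $|\mathrm{Ric}|^2\ge R^2/(n-1)$ coming from $\mathrm{Ric}(\nabla f,\cdot)=0$, is the natural (and essentially only) way to close the inequality, so this is the expected approach.
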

	
	The proposition above provides a way of studying $\vert\nabla f\vert^2$ along $\alpha_{p}(s)$. The proof of Theorem \ref{111valter33} in \cite{brgs} relies on estimates obtained from \eqref{MainIneq}. The most important of these estimates asserts that $2\lambda^2\leq \lambda b'(s)\leq4\lambda^2,\ \forall s\in(\omega_{1},\omega_{2})$. Following the same ideas, we obtain the following generalization of \eqref{112valter33}, which will be used in Section \ref{shrsch}.
	
	\begin{proposition}\label{newest}
		Let $(M,g,f,\lambda)$, be a gradient Schouten soliton with $f$ non-constant and such that for some constants $\alpha, \delta$ we have $0\leq\delta\lambda\leq R\lambda\leq\alpha\lambda$. Let $p$ be a regular point of $f$ $($i.e. a point such that $\nabla f(p)\neq0$$)$. Then
		\begin{equation}\label{lemma1}
			\left(\frac{\delta}{n-1}+2\lambda\right)f(p)\leq\vert\nabla f\vert^2(p) \leq\left(\frac{\alpha}{n-1}+2\lambda\right)f(p).  
		\end{equation}
	\end{proposition}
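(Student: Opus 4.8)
The plan is to differentiate $|\nabla f|^{2}$ along the integral curves $\alpha_{p}$ of $\nabla f/|\nabla f|^{2}$ introduced before Proposition~\ref{strategylemma}, following the proof of Theorem~\ref{th11val33} in \cite{brgs} but retaining the scalar curvature term rather than bounding it away. Fix a regular point $p$ and let $\alpha=\alpha_{p}\colon(\omega_{1},\omega_{2})\to M$ be the maximal integral curve of $\nabla f/|\nabla f|^{2}$ through $p$; since $f\circ\alpha$ is affine with derivative $1$, we may translate the parameter so that $f(\alpha(s))=s$. Writing $b(s)=|\nabla f(\alpha(s))|^{2}$ as in Proposition~\ref{strategylemma}, the first step is the identity $\langle\nabla|\nabla f|^{2},\nabla f\rangle=2\nabla\nabla f(\nabla f,\nabla f)=\big(\tfrac{R}{n-1}+2\lambda\big)|\nabla f|^{2}$, obtained by evaluating the soliton equation~\eqref{schoutensoliton} on $(\nabla f,\nabla f)$ and using that $\mathrm{Ric}(\nabla f,X)=0$ for all $X$, by Proposition~\ref{thmcatino}. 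Dividing by $|\nabla f|^{2}=b$ gives the clean formula $b'(s)=\tfrac{R(\alpha(s))}{n-1}+2\lambda$.

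Next I would convert the hypothesis into pointwise control of $b'$ along $\alpha$. If $\lambda>0$, then $0\le\delta\lambda\le R\lambda\le\alpha\lambda$ reads $\delta\le R\le\alpha$, hence $\tfrac{\delta}{n-1}+2\lambda\le b'(s)\le\tfrac{\alpha}{n-1}+2\lambda$; if $\lambda<0$, it reads $\alpha\le R\le\delta$, hence $\tfrac{\alpha}{n-1}+2\lambda\le b'(s)\le\tfrac{\delta}{n-1}+2\lambda$. In particular $b'$ has a fixed sign on $(\omega_{1},\omega_{2})$ (as also follows from \eqref{111valter33}), so $b$ is monotone in $s$, and, arguing as in \cite{brgs}, the one-sided limit of $b$ as $s$ tends to the endpoint lying over the level $f_{0}:=\min_{M}f$ (respectively $\max_{M}f$ when $\lambda<0$) exists and must vanish, since otherwise $\alpha$ would extend past a regular point and contradict maximality; then \eqref{112valter33} forces that endpoint to be exactly $s=f_{0}$. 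Thus $\alpha$ can be prolonged down to the level $\{f=f_{0}\}$, with $b\to0$ there.

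Finally, integrating $b'=\tfrac{R(\alpha(\cdot))}{n-1}+2\lambda$ from $f_{0}$ to $f(p)$ gives $|\nabla f|^{2}(p)=\int_{f_{0}}^{f(p)}\big(\tfrac{R(\alpha(s))}{n-1}+2\lambda\big)\,ds$, and combining the bounds for the integrand with the sign of $f(p)-f_{0}$ (nonnegative when $\lambda>0$, nonpositive when $\lambda<0$) collapses this to $\big(\tfrac{\delta}{n-1}+2\lambda\big)\big(f(p)-f_{0}\big)\le|\nabla f|^{2}(p)\le\big(\tfrac{\alpha}{n-1}+2\lambda\big)\big(f(p)-f_{0}\big)$, which is \eqref{lemma1} after the normalization $f_{0}=0$. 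As a consistency check, the choice $\delta=0$, $\alpha=2(n-1)\lambda$, that is, the bounds \eqref{111valter33}, recovers precisely \eqref{112valter33}. I expect the only genuinely delicate point to be the prolongation of $\alpha$ all the way to $\{f=f_{0}\}$ together with $|\nabla f|^{2}\to0$ there; but this is already contained in the analysis of \eqref{MainIneq} carried out in \cite{brgs}, so no new work is needed, and the rest is just the computation of $b'$ and a one-dimensional integration.
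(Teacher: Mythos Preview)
Your proposal is correct and follows essentially the same route as the paper: derive $b'(s)=\tfrac{R}{n-1}+2\lambda$ along the integral curve of $\nabla f/|\nabla f|^{2}$, bound $b'$ using the hypothesis on $R$, integrate down to the level $f_{0}$ where $b\to0$, and normalize $f_{0}=0$. The only cosmetic differences are that the paper multiplies through by $\lambda$ to treat both signs of $\lambda$ simultaneously (whereas you split into cases), and you are a bit more explicit than the paper about why the maximal integral curve actually reaches the extremal level with $b\to0$ there.
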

	\begin{proof}
		Let $a(s)$, $s\in(\omega_1,\omega_2)$, be a maximal integral curve of $\frac{\nabla f}{\vert\nabla f\vert^2}$, and let $b(s)=\vert\nabla f(a(s))\vert^2$.
		Notice that, since $d(\vert\nabla f\vert^2)(X)=2\nabla^2 f(X,\nabla f)$, we have from the Schouten equation (\ref{schoutensoliton}) that
		\begin{equation*}
			\begin{split}
				\mbox{Ric}(a'(s),\nabla f)+\frac{1}{2}d(\vert\nabla f\vert^2)(a'(s))=\left(\frac{R}{2(n-1)}+\lambda\right)df(a'(s)).
			\end{split}
		\end{equation*}
		Given $\mbox{Ric}(a'(s),\nabla f)=0$ (Proposition \ref{thmcatino}) and $(f\circ a)'(s)=1$, we conclude
		\begin{equation}\label{eq1}
			b'(s)=d(\vert\nabla f\vert^2)a'(s)=\frac{R}{n-1}+2\lambda.
		\end{equation}
		By integrating (\ref{eq1}) over $[s_1,s]\subset(\omega_1,\omega_2)$ we get,
		\begin{equation*}
			\lambda\int_{s_1}^s\left(\frac{R}{n-1}+2\lambda\right)=\lambda\int_{s_1}^sb'(s)=\lambda\left(b(s)-b(s_1)\right),
		\end{equation*}\label{aplicardesigualdade}
		recalling that $\lambda\delta\leq\lambda R\leq\lambda\alpha$, one has that
		\begin{equation*}
			\begin{split}
				\left(\frac{\lambda\delta}{n-1}+2\lambda^2\right)(s-s_1)\leq \lambda\left(b(s)-b(s_1)\right) \leq\left(\frac{\lambda\alpha}{n-1}+2\lambda^2\right)(s-s_1).
			\end{split}
		\end{equation*}
		It follows from $(f\circ\alpha)(s)-(f\circ\alpha)(s_{1})=s-s_{1}$ that the inequality above is equivalent to
		\begin{equation}\label{auxmaximalcurve}
			\left(\frac{\delta}{n-1}+2\lambda\right)\lambda(f(a(s))-f(a(s_1)))\leq \lambda\left(b(s)-b(s_1)\right) \leq\left(\frac{\alpha}{n-1}+2\lambda\right)\lambda(f(a(s))-f(a(s_1))).
		\end{equation}
		Let $s_0\in(\omega_1,\omega_2)$ be such that $\lim_{s\to s_0}\lambda f(a(s))= \lambda f_0$, where $\displaystyle\lambda f_0=\min_{p\in M}{\lambda f(p)}=\lambda f(p_{0})$, and, consequently, $\displaystyle \lim_{s\to s_0}b(s)=0$. Then, by doing $a(s)=f(p)$ and $s_1\to s_0$ in (\ref{auxmaximalcurve}), we have
		$$ \left(\frac{\delta}{n-1}+2\lambda\right)(f(p)-f_0)\leq\vert\nabla f\vert^2(p) \leq\left(\frac{\alpha}{n-1}+2\lambda\right)(f(p)-f_0).$$
		Noticing that, if $f$ {satisfies} the Schouten soliton, then also does $f+c$ for any constant $c$, we can assume without {loss} of generality that $f_0=0$ and the proposition is therefore proved.
	\end{proof}
	
	\subsection{Ends of Smooth Metric Measure Spaces}\label{sub2}
	We start this subsection by recalling a few definitions. An {\it end} $E$ with respect to a smooth compact subset of $M$ is an unbounded connected component of its complement, and the quantity of such connected components is the number of ends with respect to this compact subset. If this number has an upper bound, no matter how big the compact set is, we say $M$ has {\it finitely many ends} and we call the smallest upper bound $n_{0}=n_{0}(M)$ the number of ends of $M$. We also say $M$ {\it has $n_{0}$ ends}. If $n_{0}=1$, we say that $M$ is {\it connected at infinity}.

    In what follows, we consider a smooth metric measure space $(M,g,e^{-\varphi}dV)$, where $\varphi\in C^{\infty}(M)$. We say that a function $u\in C^{2}(M)$ is $\varphi$-harmonic if $\Delta_\varphi u=\Delta u-\langle \nabla \varphi,\nabla u\rangle=0$.
	
	One important tool in investigating the existence of $\varphi$-harmonic functions is Green's function for the $\varphi$-Laplacian $\Delta_\varphi$ (see \cite{pLi1} and Section 2 of \cite{wan2} for details). A Green's function is not always positive, which motivates the definitions below.
	
	A Riemannian manifold $M$ is said to be $\varphi$-{\it non-parabolic} if it admits a positive Green's function. If such a function does not exist, the manifold is called $\varphi$-{\it parabolic}. This definition can be localized to an end. An end $E$ is called $\varphi$-{\it non-parabolic} if the Laplacian, $\Delta_{\varphi}$, has a positive Green's function on $E$ satisfying the Neumann boundary condition. Otherwise, $E$ is called $\varphi$-{\it parabolic}. If $\varphi$ is constant, we just say the manifold or one of its ends is {\it parabolic} or {\it non-parabolic}, according to the appropriate Green's function. For the latter, see Chapters 17, 20 and 21 of \cite{pLi}.
	
	The concepts above are intimately related to the existence of non-negative $\varphi$-subharmonic functions. Recall that $u$ is $\varphi$-subharmonic if $\Delta_\varphi u\geq0$. It is $\varphi$-superharmonic if $-u$ is $\varphi$-subharmonic. The following criterion for detecting $\varphi$-parabolicity will be used in this paper.
	
	\begin{proposition}\label{imptoo1}
		Let $E$ be an end of a smooth metric measure space $(M,g,e^{-\varphi}dV)$. If there exists a positive $\varphi$-superharmonic function $u$ defined on $E$ satisfying $\displaystyle\liminf_{x\rightarrow E(\infty)}u(x)=0$, then $E$ is $\varphi$-non-parabolic.
	\end{proposition}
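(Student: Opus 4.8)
\noindent\emph{Proof sketch (plan).} The plan is to build a positive Green's function on $E$ by the standard capacitary construction of Li--Tam theory (see \cite{endharm} and Chapters~20--21 of \cite{pLi}), using $u$ as a barrier. Fix $p\in M$ and an increasing sequence $R_i\to\infty$ with $E\setminus B_{R_i}(p)\neq\emptyset$ for every $i$, and set $\Omega_i=E\cap B_{R_i}(p)$, whose boundary consists of $\partial E$ together with $\Gamma_i:=\partial B_{R_i}(p)\cap E$ (choosing the $R_i$ to be regular values of $d(\cdot,p)$ so that $\Gamma_i$ is smooth). Let $f_i$ solve the Dirichlet problem $\Delta_\varphi f_i=0$ in $\Omega_i$, $f_i=0$ on $\partial E$, $f_i=1$ on $\Gamma_i$. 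By the maximum principle $0\le f_i\le1$, and comparing $f_j$ with $f_i$ on $\Omega_i$ for $j>i$ (they agree on $\partial E$, while $f_j\le1=f_i$ on $\Gamma_i$) gives $f_j\le f_i$ on $\Omega_i$; hence $f:=\lim_{i\to\infty}f_i$ exists, is $\varphi$-harmonic, and $0\le f\le1$. The criterion we invoke is the standard one: $E$ is $\varphi$-non-parabolic if and only if $f\not\equiv0$.

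Next I would feed in the superharmonic barrier. Since $\partial E$ is compact and $u>0$ there, the number $m:=\min_{\partial E}u$ is positive; replacing $u$ by $u/m$ we may assume $u\ge1$ on $\partial E$, with $u>0$ on $E$ and $\liminf_{x\to E(\infty)}u(x)=0$ still valid. On $\Omega_i$ the function $u-(1-f_i)$ is $\varphi$-superharmonic, because $u$ is $\varphi$-superharmonic and $1-f_i$ is $\varphi$-harmonic. On $\partial\Omega_i$ it is non-negative: it equals $u-1\ge0$ on $\partial E$ and equals $u\ge0$ on $\Gamma_i$. The minimum principle for $\varphi$-superharmonic functions then forces $u-(1-f_i)\ge0$ on $\Omega_i$, i.e.\ $f_i\ge1-u$ there. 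Letting $i\to\infty$ yields $f\ge1-u$ on all of $E$.

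Finally, by hypothesis there is a sequence $x_j\in E$ with $x_j\to E(\infty)$ and $u(x_j)\to0$, whence $f(x_j)\ge1-u(x_j)\to1$; in particular $f\not\equiv0$, so $E$ is $\varphi$-non-parabolic. I do not expect a genuine obstacle here: the only points requiring care are the elliptic regularity, maximum principle and local compactness used to define the $f_i$ and extract the monotone limit (all routine), and the invocation of the capacitary characterization equating ``$f\not\equiv0$'' with the existence of a positive Green's function on $E$ satisfying the Neumann condition on $\partial E$, for which we rely on \cite{endharm,pLi}.
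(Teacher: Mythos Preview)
Your proof is correct and follows the standard barrier-sequence argument from Li--Tam theory; the paper itself does not give a proof but simply points to Lemma~2.3 of \cite{wan2} and Theorem~17.3 of \cite{pLi}, which carry out precisely the capacitary construction you describe. In other words, you have supplied the details the paper defers to the literature, and your approach coincides with theirs.
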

	
	The result above can be proved using the same arguments employed in the proof of Lemma 2.3 of \cite{wan2}. Compare the result above with Theorem 17.3 of \cite{pLi}.
	
	From the theory developed by Li and Tam in \cite{pLi0}, the dimension of certain spaces of harmonic {functions} helps estimating the number of ends. One aspect in this theory is that the existence of more than one end provides the existence of nonconstant harmonic functions satisfying special properties. This was first developed by taking into account the notion of non-parabolicity. However, as observed by Munteanu and Wang \cite{MuWa1,MuWa2,MuWa4,MuWa5}, this also holds true when considering $\varphi$-non-parabolicity. When the manifold is $\varphi$-non-parabolic, the following result is an important tool used in \cite{MuWa1,MuWa2,MuWa4,MuWa5}, and will be required in the next sections.
	
	\begin{proposition}\label{imptoo2}
		If the smooth metric measure space $(M,g,e^{-\varphi}dV)$ has at least two $\varphi$-non-parabolic ends, then there is a nonconstant bounded $\varphi$-harmonic function $u$ so that
		\begin{align*}	
			\int_{M}\vert\nabla u\vert e^{-\varphi}<\infty.
		\end{align*}
	\end{proposition}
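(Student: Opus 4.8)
The plan is to carry out the Li--Tam construction of bounded harmonic functions attached to ends \cite{pLi0,endharm}, adapted to the drifted Laplacian $\Delta_\varphi$ exactly as in \cite{MuWa1,MuWa2,MuWa4,MuWa5}. Fix a smooth compact domain $\Omega_0\subset M$ with respect to which $E_1$ and $E_2$ are two distinct $\varphi$-non-parabolic ends, and a smooth compact exhaustion $\Omega_0\subset\Omega_1\subset\cdots$ of $M$. On each $\Omega_i$ solve the Dirichlet problem $\Delta_\varphi u_i=0$ in $\Omega_i$ with $u_i\equiv 1$ on the part of $\partial\Omega_i$ contained in $\overline{E_1}$ and $u_i\equiv 0$ on the remaining part of $\partial\Omega_i$. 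The maximum principle gives $0\le u_i\le 1$, and Schauder interior estimates for $\Delta_\varphi$ (whose coefficients are smooth) together with a diagonal argument produce a subsequence converging in $C^2_{\mathrm{loc}}$ to a $\varphi$-harmonic function $u$ with $0\le u\le 1$.

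The heart of the matter, and the step I expect to be the main obstacle, is to show that $u$ is non-constant; this is precisely where $\varphi$-non-parabolicity of \emph{both} ends is used. On $E_1$ I would compare $u_i$ with the $\varphi$-capacity potentials $\psi_i^{(1)}$ of $\Omega_0\cap E_1$ relative to $\partial\Omega_i\cap E_1$, i.e. the $\Delta_\varphi$-harmonic functions on $E_1\cap(\Omega_i\setminus\Omega_0)$ equal to $0$ on $\partial\Omega_0\cap E_1$ and $1$ on $\partial\Omega_i\cap E_1$; since $u_i\ge 0=\psi_i^{(1)}$ on the inner boundary and $u_i=1=\psi_i^{(1)}$ on the outer one, the maximum principle gives $u_i\ge\psi_i^{(1)}$, and passing to the limit $u\ge\psi_\infty^{(1)}$ on $E_1\setminus\Omega_0$, where $\psi_\infty^{(1)}$ is a nonconstant $\varphi$-harmonic function with $\sup_{E_1}\psi_\infty^{(1)}=1$ precisely because $E_1$ has positive $\varphi$-capacity. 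Running the symmetric comparison with $1-u_i$ on $E_2$ (where $u_i\equiv 0$ on $\partial\Omega_i\cap E_2$) yields $u\le 1-\psi_\infty^{(2)}$ on $E_2\setminus\Omega_0$ with $\sup_{E_2}\psi_\infty^{(2)}=1$; hence $\sup_{E_1}u=1$ while $\inf_{E_2}u=0$, so $u$ is non-constant, and by the strong maximum principle $0<u<1$ on $M$. In particular $u$ separates the two ends.

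Given such a $u$, finiteness of its $\varphi$-Dirichlet energy is routine: Green's identity for $\Delta_\varphi$ gives
\[
\int_{\Omega_i}|\nabla u_i|^2 e^{-\varphi}\,dV=\int_{\partial\Omega_i}u_i\,\frac{\partial u_i}{\partial\nu}\,e^{-\varphi}\,d\sigma=\int_{\partial\Omega_i\cap\overline{E_1}}\frac{\partial u_i}{\partial\nu}\,e^{-\varphi}\,d\sigma,
\]
and the right-hand side is the relative $\varphi$-capacity, which is monotone in $i$ and uniformly bounded; lower semicontinuity of the Dirichlet energy under $C^2_{\mathrm{loc}}$-convergence then gives $\int_M|\nabla u|^2 e^{-\varphi}\,dV<\infty$. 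To reach the stated bound $\int_M|\nabla u|\,e^{-\varphi}<\infty$ I would use the coarea formula together with the compactness of the level sets $\{u=t\}$, $t\in(0,1)$ (a consequence of the separation property and finite energy): from $\int_M|\nabla u|\,e^{-\varphi}\,dV=\int_0^1\big(\int_{\{u=t\}}e^{-\varphi}\,d\sigma\big)dt$, the constancy in $t$ of the $\varphi$-flux $\int_{\{u=t\}}|\nabla u|\,e^{-\varphi}\,d\sigma$ (which equals the finite energy) controls the level-set integrals; in the shrinking application one may instead combine the finite energy with finiteness of $\int_M e^{-\varphi}\,dV$ via Cauchy--Schwarz, and in the expanding one with the decay of $u$ forced by the spectral gap of $\Delta_\varphi$. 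This last step is the genuinely technical one, since — as the example $\mathbb{R}^n\#\mathbb{R}^n$ with trivial weight shows — a non-constant bounded $\varphi$-harmonic function of finite Dirichlet energy need not have $|\nabla u|\in L^1$ without using structure of the weight; the rest of the argument is identical to \cite{MuWa1,MuWa2} and the remarks in \cite{wan2}.
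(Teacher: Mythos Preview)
The paper does not prove this proposition; it simply records it as a known consequence of Li--Tam theory \cite{pLi0,endharm} in the weighted setting as developed by Munteanu and Wang \cite{MuWa1,MuWa2,MuWa4,MuWa5}, and your construction follows exactly those references.

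One remark: the exponent in the displayed integral is evidently a typo. Both applications in the paper (the proofs of Theorem~\ref{thm1}\,i) and of Theorem~\ref{thexpss}) invoke the proposition with $\int_M|\nabla u|^2 e^{-\varphi}<\infty$, i.e.\ finite Dirichlet energy, which is what the Li--Tam argument actually delivers via the capacity bound you wrote down. Your final paragraph's attempt to upgrade to an $L^1$ bound on $|\nabla u|$ is therefore unnecessary, and, as you rightly note with the $\mathbb{R}^n\#\mathbb{R}^n$ example, such a bound does not follow in general without additional structure on the weight.
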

	
	When $\varphi$ is constant, the corresponding result is due to Li and Tam, and was obtained in \cite{pLi0}. See also \cite{endharm} and Theorem 21.3 of \cite{pLi}.
	
	\section{Ends of Shrinking Schouten Soliton}\label{shrsch}
	
	The purpose of this section is to prove Theorem \ref{thm1item1} and Theorem \ref{thm1}. 
	
	\begin{proof}[{\bf Proof of Theorem \ref{thm1item1}}]
		Consider real numbers satisfying $f_{0}<a<b$, where $f_{0}$ is the minimum value of $f$ given in Theorem \ref{th11val33}, and define the sets $M_{a}^{b}=\{p\in M;a\leq f(p)\leq b\}$ and $M^{b}=\{p\in M;f(p)\leq b\}$.  from Theorem \ref{thm12vb} $f$ is proper and, consequently, $M_{a}^{b}$ is compact. On the other hand, using Theorem \ref{th11val33} and Theorem \ref{thm12vb} we conclude that for any $p\in M$
				\begin{align*}
					\vert\nabla f\vert^2(p)\geq2\lambda(f(p)-f_0)\geq\frac{\lambda^2}{2}(d(p)-C_1)^2.
				\end{align*}
		This means that we can choose $a>f_{0}$ so that $f$ has no critical points in $M_{a}^{b}$, for any $b>a$. Observing that $M^{b}=M^{a}\cup M_{a}^{b}$ and that $ M_{a}^{b}$ is compact, Morse theory (Theorem 3.1 of \cite{mil}) assures $M^{a}$ is diffeomorphic to $M^{b}$. Writing $\displaystyle M=\cup_{a\leq b}M^{b}$, we see that $M$ is diffeomorphic to the interior of a compact manifold with boundary, i.e., has finite topological type.
		
		As a consequence of the discussion above, $\partial M^{a}$ and $\partial M^{b}$ are diffeomorphic. Hence, they have the same quantity of boundary components for any $b>a$, showing $M$ has finitely many ends. This also shows each end is diffeomorphic to a component of $\partial M^{a}$ times $[0,\infty)$.
	\end{proof}

	\begin{proof}[{\bf Proof of Theorem \ref{thm1} item i)}]
		Suppose $M$ has two $f$-non-parabolic ends. By Proposition \ref{imptoo2}, $M$ admits a positive non-constant bounded $f$-harmonic function $v$ such that
		$$\int\limits_M\vert\nabla v\vert^2e^{-f}<\infty.$$
		Recalling $\mbox{Ric}_f=\mbox{Ric}+\nabla^2 f$ we get from the Schouten soliton equation that
		$$\mbox{Ric}_f=\left(\frac{R}{2(n-1)}+\lambda\right)g.$$
		Thus, by the Bochner formula for the weighted Laplacian we conclude
		\begin{equation*}
			\begin{split}
				\frac{1}{2}\Delta_f \vert\nabla v\vert^2=&\vert\nabla^2 v\vert^2+\langle\nabla\Delta_f v, \nabla v \rangle+\mbox{Ric}_f(\nabla v,\nabla v)\\
				=&\vert\nabla^2 v\vert^2 +\left(\frac{R}{2(n-1)}+\lambda\right)\vert\nabla v\vert^2.
			\end{split}
		\end{equation*}
		Let $\phi$ be a cut-off function over $M$ such that $\phi=1$ on $B_p(r)$ and $\phi=0$ outside $B_p(2r)$, for $p\in M$ and $r>0$. Then, integrating over $M$ we get from the identity above, the divergence theorem and the Schwarz and Young inequalities that
		\begin{equation*}
			\begin{split}
				2\int\limits_M\vert\nabla^2 v\vert^2\phi^2\mbox{e}^{-f} +\int\limits_M\left(\frac{R}{(n-1)}+2\lambda\right)\vert\nabla v\vert^2\phi^2\mbox{e}^{-f}=&-\int\limits_M\langle\nabla\vert\nabla v\vert^2,\nabla\phi^2\rangle\mbox{e}^{-f}\\
				&=-4\int\limits_M\phi\nabla^2 v\langle\nabla v,\nabla\phi\rangle\mbox{e}^{-f}\\
				&\leq 4\int\limits_M\vert\phi\vert\vert\nabla^2 v\vert\vert\nabla v\vert\vert\nabla\phi\vert\mbox{e}^{-f}\\
				&\leq 2\int\limits_M \vert\nabla^2 v\vert^2\phi^2\mbox{e}^{-f} + 2\int\limits_M\vert\nabla v\vert^2\vert\nabla\phi\vert^2\mbox{e}^{-f}.
			\end{split}
		\end{equation*}
		Once $R\geq0$, the expression above implies
		$$\lambda\int\limits_M\vert\nabla v\vert^2\phi^2\mbox{e}^{-f}\leq\int\limits_M\vert\nabla v\vert^2\vert\nabla\phi\vert^2\mbox{e}^{-f}.$$
		Since $\int_M\vert\nabla v\vert^2e^{-f}<\infty$, the right side must tend to zero as $r\to\infty$ (because $\vert\nabla\phi\vert\to 0$), this forces $\vert\nabla v\vert^2=0$ as $\lambda$ is not null. Therefore $v$ must be constant, which is a contradiction. Thus $M$ must have at most one $f$-non-parabolic end, as we wanted to prove.
	\end{proof}

	\begin{proof}[{\bf Proof of Theorem \ref{thm1} item ii)}]
		Taking the trace over (\ref{schoutensoliton}) we know that
		$$\Delta f=\left(\frac{n}{2(n-1)}-1\right)R+n\lambda.$$
		Since $R\leq\alpha$, we have
		$$\Delta f\geq n\lambda-\frac{n-2}{2(n-1)}\alpha.$$
		Thus, by taking
		$$a=\frac{2(n-1)(n-2)\lambda-n\alpha}{2(\alpha+2\lambda)}>0,$$
		we get from (\ref{lemma1}) that
		\begin{equation}
			\begin{split}
				\Delta f^{-a}&=-af^{-a-1}\Delta f+a(a+1)\vert\nabla f\vert^2f^{-a-2}\\
				&\leq -af^{-a-1}\left( n\lambda-\frac{n-2}{2(n-1)}\alpha \right)+a(a+1)\left(\frac{\alpha}{n-1}+2\lambda\right)f^{-a-1}\\
				&=af^{-a-1}\left[(a+1)\left(\frac{\alpha}{n-1}+2\lambda\right)-\left( n\lambda-\frac{n-2}{2(n-1)}\alpha \right)\right]=0.
			\end{split}
		\end{equation}
		On the other hand, from Theorem \ref{thm12vb} there is a constant $C>0$ such that $C(d(p))^2<f(p)$, where $d(p)$ is the distance function from a fixed point $p_0\in M$, and then
		$$f^{-a}\to0,\ \  \mbox{ as }\ \  {p\to+\infty}.$$
		
		Thus, the restriction of $f^{-a}$ to an end $E$ of $M$ provides a positive super-harmonic function which converges to zero at infinity and, by Proposition \ref{imptoo1}, $E$ is non-parabolic.
	\end{proof}
	
	\section{Volume and Ends of Expanding Schouten Solitons}\label{expsch}
	
	In this section, we prove the results concerning gradient expanding Schouten solitons. We first establish a quadratic lower decay for $f$. This is fundamental in the proof of Theorem \ref{grwth}, which is presented in the sequence.
	
	\begin{proposition}
		Let $(M,g,f,\lambda)$ be a complete non-trivial expanding Schouten soliton, $p_{0}$ a maximum of $f$, i.e., $f_{0}=f(p_{0})$, and $p\in M$ a fixed point.Then
		\begin{align}\label{potgrwint}
			f(x)\geq\lambda(r(x))^2-2\sqrt{-\lambda}\sqrt{f_{0}-f(p)}r(x)+f(p)
		\end{align}
		where $r(x)=d(x,p)$. If we take $p=p_{0}$, then
		\begin{align}\label{potgrw}
			f(x)\geq\lambda(r(x))^2+f_{0},
		\end{align}
	\end{proposition}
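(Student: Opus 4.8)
The plan is to integrate the gradient estimate supplied by Theorem~\ref{th11val33} along a minimizing geodesic emanating from $p$. Since we are in the expanding case $\lambda<0$ and $f_0=\max_M f$, the right-hand inequality in \eqref{112valter33} reads $|\nabla f|^2\leq 4\lambda(f-f_0)=4(-\lambda)(f_0-f)$, so that $|\nabla f|\leq 2\sqrt{-\lambda}\,\sqrt{f_0-f}$ everywhere on $M$; note the radicands are non-negative precisely because $f\leq f_0$, which is part of the same theorem.

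Next I fix a unit-speed minimizing geodesic $\gamma\colon[0,r]\to M$ with $\gamma(0)=p$ and $\gamma(r)=x$, where $r=r(x)=d(x,p)$, and set $h(t)=f_0-f(\gamma(t))\geq 0$. Then $h'(t)=-\langle\nabla f(\gamma(t)),\gamma'(t)\rangle$, so Cauchy--Schwarz together with the gradient bound gives $|h'(t)|\leq|\nabla f(\gamma(t))|\leq 2\sqrt{-\lambda}\,\sqrt{h(t)}$. The target is the differential-inequality conclusion $\sqrt{h(t)}\leq\sqrt{h(0)}+\sqrt{-\lambda}\,t$ for all $t\in[0,r]$.

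To reach it cleanly I would regularize, since $\sqrt{h}$ need not be differentiable where $h$ vanishes: for $\varepsilon>0$ put $h_\varepsilon=h+\varepsilon>0$, which is smooth and satisfies $h_\varepsilon'=h'\leq 2\sqrt{-\lambda}\,\sqrt{h}\leq 2\sqrt{-\lambda}\,\sqrt{h_\varepsilon}$. Hence $(\sqrt{h_\varepsilon})'=h_\varepsilon'/(2\sqrt{h_\varepsilon})\leq\sqrt{-\lambda}$; integrating over $[0,t]$ and letting $\varepsilon\to 0$ yields $\sqrt{h(t)}\leq\sqrt{h(0)}+\sqrt{-\lambda}\,t$. (Alternatively one works on maximal subintervals where $h>0$ and patches across zeros using that $\nabla f=0$ wherever $f=f_0$, which follows from the same gradient bound.)

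Finally, evaluating at $t=r$, squaring, and substituting $h(0)=f_0-f(p)$ and $h(r)=f_0-f(x)$ gives $f_0-f(x)\leq(f_0-f(p))+2\sqrt{-\lambda}\,\sqrt{f_0-f(p)}\,r(x)-\lambda\,r(x)^2$, which rearranges to \eqref{potgrwint}. Taking $p=p_0$ makes $h(0)=0$ and collapses this to \eqref{potgrw}. I do not expect a genuine obstacle: all the force is in the gradient estimate of Theorem~\ref{th11val33}, and the only point requiring care is the non-differentiability of $\sqrt{h}$ at its zeros, handled by the $\varepsilon$-regularization above.
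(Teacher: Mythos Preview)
Your argument is correct and is essentially the same as the paper's: both derive $\vert\nabla\sqrt{f_0-f}\vert\leq\sqrt{-\lambda}$ from \eqref{112valter33} and integrate this Lipschitz bound along a minimizing geodesic from $p$ to obtain $\sqrt{f_0-f(x)}\leq\sqrt{f_0-f(p)}+\sqrt{-\lambda}\,r(x)$, then square. Your $\varepsilon$-regularization just makes explicit the care needed where $f=f_0$, which the paper handles implicitly via the Lipschitz formulation.
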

	\begin{proof}
		Fix a point $p\in M$. It follows from \eqref{112valter33} that 
		\begin{align}\label{lipineq}
			\vert\nabla\sqrt{f_{0}-f}\vert\leq\sqrt{-\lambda}.
		\end{align}
		Consequently, $\sqrt{f_{0}-f}$ is a Lipschitzian. Integrating the right hand side of \eqref{lipineq} along minimal geodesics emanating from $p$ gives
		\begin{align*}
			\sqrt{f_{0}-f(x)}-\sqrt{f_{0}-f(p)}\leq\sqrt{-\lambda}r(x),
		\end{align*}
		which proves \eqref{potgrwint}.
	\end{proof}

	For future use we remark that from \eqref{112valter33} and \eqref{potgrwint} there exist constants $c_{1}>0$ and $c_{2}>0$ so that
	\begin{align}\label{gradest}
		\vert\nabla f\vert^2
		&\leq4\lambda^2r^2+c_{1}r+c_{2}.
	\end{align}

	\begin{proof}[{\bf Proof of Theorem \ref{grwth}}]
		Our proof follows the ideas established in the proofs of Theorem 5.1 of \cite{MuWa2} and Theorem 5.2 of \cite{wan1}. 
		
		In what follows we fix $p\in M$. First notice that the lower bound of \eqref{grtbhv} follows immediately from \eqref{potgrwint}. Before establishing the upper bound of \eqref{grtbhv}, we will prove the upper bound on the volume growth stated in \eqref{volgrwt}. 
		
		To estimate the volume, first recall that on polar coordinates one has $dV\large|_{exp_p(r\xi)}=J(x,r,\xi)drd\xi$. In what follows we will write $J(r)$ instead of $J(x,r,\xi)$, and $J'(r)$ for the derivative of $J(r)$ with respect to $r$. Once $Ric_{f}\geq2\lambda g$, it follows from the Bochner formula for the $f$-Laplacian that
		\begin{align*}
			\left(\frac{J'}{J}\right)'+\frac{1}{n-1}\left(\frac{J'}{J}\right)^2+2\lambda-f''\leq0.
		\end{align*}
		Integrating the inequality above we conclude the function $u(r)=\frac{J'(r)}{J(r)}$ satisfies
		\begin{align}
			u(r)+\frac{1}{(n-1)r}\left(\int_{1}^{r}u(s)ds\right)^2\leq-4\lambda r+c_{4},
		\end{align}
		where we have used $f'(r)\leq-2\lambda r+\tilde{c}$, which follows from \eqref{gradest}, for some constant $\tilde{c}>0$. Using the definition of $u(r)$, we conclude that
		\begin{align*}
			\ln{J(r)}-\ln{J(1)}=\int_{1}^{r}u(s)ds\leq\sqrt{-4(n-1)\lambda}r+\sqrt{-\frac{n-1}{4\lambda}}c_{4}.
		\end{align*}
		Integrating the inequality above on $\xi\in\mathbb{S}^{n-1}\subset T_{p}M$ for a fixed $r$, we obtain the area of $\partial B_{p}(r)$
		\begin{align}\label{area}
			A(\partial B_{p}(r))\leq Ce^{\sqrt{-4(n-1)\lambda}r}.
		\end{align}
		Integrating further with respect to $r$ we obtain \eqref{volgrwt}.
		
		Now we prove the upper bound of \eqref{grtbhv}. In order to do that, fix $\varepsilon$ in the interval $(0,1)$ and define $u=f_{0}-f+\varepsilon$. It follows from Theorem \ref{th11val33} that
		\begin{align}
			&u\geq\varepsilon>0\label{relabled0},\\
			&-2\lambda(u-\varepsilon)\leq\vert\nabla u\vert^2\leq-4\lambda(u-\varepsilon),\label{relabled}\\
			&\Delta u=\frac{n-2}{2(n-1)}-n\lambda.\label{relabled2}
		\end{align}
		For a given $k$, a straightforward computation gives
		\begin{align}\label{comp1}
			\Delta e^{2k\sqrt{u}}=\left(\frac{k}{\sqrt{u}}\left(\frac{n-2}{2(n-1)}R-n\lambda\right)+\left(\frac{k^2}{u}-\frac{k}{2u\sqrt{u}}\right)\vert\nabla u\vert^2\right)e^{2k\sqrt{u}}.
		\end{align}
		On the other hand, using Green's identity for $\Delta$ and integrating on the geodesic ball $B_{p}(r)$ one has
		\begin{align}
			\begin{split}\label{comp2}
				\int_{B_{p}(r)}(\Delta e^{2k\sqrt{u}})u^{k^{2}}&=k\int_{\partial B_{p}(r)}\frac{1}{\sqrt{u}}\frac{\partial u}{\partial r}u^{k^{2}}e^{2k\sqrt{u}}-k^3\int_{B_{p}(r)}\frac{\vert\nabla u\vert^2}{\sqrt{u}}u^{k^{2}-1}e^{2k\sqrt{u}}\\
				&\leq k\int_{\partial B_{p}(r)}\frac{\vert\nabla u\vert}{\sqrt{u}}u^{k^{2}}e^{2k\sqrt{u}}-k^3\int_{B_{p}(r)}\frac{\vert\nabla u\vert^2}{\sqrt{u}}u^{k^{2}-1}e^{2k\sqrt{u}}.
			\end{split}
		\end{align}
		Inserting \eqref{comp1} into \eqref{comp2} and reordering terms we obtain
		\begin{align*}
			\int_{\partial B_{p}(r)}\frac{\vert\nabla u\vert}{\sqrt{u}}u^{k^{2}}e^{2k\sqrt{u}}\geq\int_{B_{p}(r)}\left(\frac{1}{\sqrt{u}}\left(\frac{n-2}{2(n-1)}R-n\lambda\right)+\left(\frac{k}{u}+\frac{2k^2-1}{2u\sqrt{u}}\right)\vert\nabla u\vert^2\right)u^{k^{2}}e^{2k\sqrt{u}}
		\end{align*}
		If we assume $2k^2>1$, the inequality above, together with \eqref{relabled2} and the left hand side of \eqref{relabled}, implies that
		\begin{align}\label{intinterm}
			\int_{\partial B_{p}(r)}\frac{\vert\nabla u\vert}{\sqrt{u}}u^{k^{2}}e^{2k\sqrt{u}}\geq-2\lambda\int_{B_{p}(r)}\left(\frac{1}{\sqrt{u}}+\frac{k(u-\varepsilon)}{u}+\frac{(2k^2-1)(u-\varepsilon)}{2u\sqrt{u}}\right)u^{k^{2}}e^{2k\sqrt{u}}.
		\end{align}
		
		In what follows we will prove that for a fixed $x$ the following inequality holds
		\begin{align}\label{ineq1}
			2\sqrt{-\lambda}\left(\frac{1}{\sqrt{u}}+\frac{k(u-\varepsilon)}{u}+\frac{(2k^2-1)(u-\varepsilon)}{2u\sqrt{u}}\right)\geq k \frac{\sqrt{-4\lambda(u-\varepsilon)}}{\sqrt{u}}.
		\end{align}
		First rewrite it as 
		\begin{align*}
			1+k\sqrt{u}-k\varepsilon\frac{\sqrt{u}}{u}+\left(k^2-\frac{1}{2}\right)\left(1-\frac{\varepsilon}{u}\right)\geq k\sqrt{u-\varepsilon},
		\end{align*}
		define $\gamma>0$ by the equality $\varepsilon\gamma^2=u$ and observe that \eqref{relabled0} implies $\gamma\geq1$. Rewriting the inequality above in terms of $\gamma$ we get
		\begin{align*}
			\left(1-\frac{1}{\gamma}\right)k^2+\left(\frac{\gamma^2-1-\gamma\sqrt{\gamma^2-1}}{\gamma}\right)\sqrt{\varepsilon}k+\frac{\gamma^2+1}{2\gamma^2}\geq0.
		\end{align*}
		If $\gamma=1$, the inequality above is clearly true. Let us assume $\gamma>1$. In this case we define $p(x)=Ax^2+Bx+C$, for $A=1-\frac{1}{\gamma}$, $B=\left(\frac{\gamma^2-1-\gamma\sqrt{\gamma^2-1}}{\gamma}\right)\sqrt{\varepsilon}$ and $C=\frac{\gamma^2+1}{2\gamma^2}$. With this notation, \eqref{ineq1} is equivalent to $p(k)\geq0$, for any $k\geq\frac{1}{2}$. The last assertion will be proved if we show that the discriminant of $p(x)$ satisfies $B^2-4AC\leq0$. A simple calculation gives
		\begin{align*}
			B^2-4AC&=\left(\frac{\gamma^2-1}{\gamma^2}\right)\left((2\gamma^2-1-2\gamma\sqrt{\gamma^2-1})\varepsilon-\frac{2(\gamma^2+1)}{\gamma^2}\right).
		\end{align*}		
Since $0<2\gamma^2-1-2\gamma\sqrt{\gamma^2-1}<1$ one has
		\begin{align*}
			B^2-4AC&<\left(\frac{\gamma^2-1}{\gamma^2}\right)\left(\varepsilon-2\right).
		\end{align*}
		As we have taken $\varepsilon\in(0,1)$, we conclude that $B^2-4AC<0$, and \eqref{ineq1} follows.
		
		Substituting \eqref{ineq1} in \eqref{intinterm} and using the right hand side of \eqref{relabled}, we conclude that
		\begin{align}\label{impineq}
			\int_{\partial B_{p}(r)}\vert\nabla u\vert u^{k^{2}-\frac{1}{2}}e^{2k\sqrt{u}}\geq \sqrt{-\lambda}k\int_{B_{p}(r)}\vert\nabla u\vert u^{k^{2}-\frac{1}{2}}e^{2k\sqrt{u}}.
		\end{align}
		Setting $\varphi(r)=\int_{B_{p}(r)}\vert\nabla u\vert u^{k^{2}-\frac{1}{2}}e^{2k\sqrt{u}}$ and using the Coarea Formula, the inequality above can be rewritten as $\varphi'(r)\geq \sqrt{-\lambda}k\varphi(r)$, for $r\geq0$. As $f$ is non-constant, there is $r_{0}>0$ so that $\varphi(r_{0})>0$. Integrating the differential inequality we conclude the existence of a positive constant $c>0$ so that $\varphi(r)\geq ce^{\sqrt{-\lambda}kr}$, $\forall r\geq r_{0}$, which from \eqref{impineq} implies
		\begin{align*}
			\int_{\partial B_{p}(r)}\vert\nabla u\vert u^{k^{2}-\frac{1}{2}}e^{2k\sqrt{u}}\geq c e^{\sqrt{-\lambda}kr}.
		\end{align*}
		Maximizing $u$ on $\partial B_{p}(r)$ in the inequality above, using \eqref{gradest} and \eqref{area} we conclude that
		\begin{align}
			\sup_{\partial B_{p}(r)}{e^{2k\sqrt{u}}}\geq Ce^{\sqrt{-\lambda}kr-2k^2\ln r-\sqrt{-4(n-1)\lambda}r},
		\end{align}
		where we have taken $r>>1$. As a consequence
		\begin{align*}
			\sup_{\partial B_{p}(r)}{{\sqrt{u}}}\geq \frac{\sqrt{-\lambda}}{2}r-k\ln r-c\frac{r}{k},
		\end{align*}
		for a positive constant $c$. It is not hard to see that for a fixed $r$, the maximum of the expression on the right hand side of the inequality above is assumed at $k=\sqrt{\frac{cr}{\ln r}}$. As a consequence, we obtain
		\begin{align*}
			\sup_{\partial B_{p}(r)}{{(f_{0}-f+\varepsilon})}\geq\left(\frac{\sqrt{-\lambda}}{2}r-\sqrt{cr\ln r}\right)^2\geq-\frac{\lambda}{4}r^2-cr^{\frac{3}{2}}\sqrt{\ln r}.
		\end{align*}
		The desired upper bound follows letting $\varepsilon$ to $0$.
	\end{proof}
	
	For the proof of Theorem \ref{thexpss}, we need the following auxiliary result, which provides a weighted Poincaré inequality and a positive lower bound to $\mu_{1}(\Delta_{f})$, the spectrum of $\Delta_{f}$.
	\begin{proposition}\label{lem51forsxs}
		Let $(M,g,f,\lambda)$ be a complete non-trivial expanding Schouten soliton and set
		\begin{align*}
			\sigma=\frac{n-2}{2(n-1)}R-n\lambda.
		\end{align*}
		Then $\sigma\geq-2\lambda>0$ and
		\begin{align}\label{weipoicineq}
			\int\limits_M \sigma\phi^2\mbox{e}^{-f}\leq\int\limits_M\vert\nabla\phi\vert^2\mbox{e}^{-f},
		\end{align}
		for any $\phi\in C_0^\infty(M)$. In particular, $\mu_{1}(\Delta_{f})\geq-2\lambda$ and $M$ is $f$-non-parabolic.
	\end{proposition}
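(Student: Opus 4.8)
The strategy rests on the Schouten identity $\Delta f = n\lambda - \frac{n-2}{2(n-1)}R = -\sigma$ from Proposition \ref{thmcatino}, which makes $h := e^{f}$ an \emph{exact} positive solution of $\Delta_{f}h = -\sigma h$; substituting $\phi = h\psi$ into \eqref{weipoicineq} will then turn it into an identity plus a manifestly non-negative remainder. First I would record the pointwise bound on $\sigma$. Since the soliton is expanding, $\lambda<0$, and \eqref{111valter33} gives $0\le\lambda R\le 2(n-1)\lambda^{2}$; dividing through by $\lambda<0$ yields $2(n-1)\lambda\le R\le 0$. As $n\ge 3$ the coefficient $\frac{n-2}{2(n-1)}$ is non-negative, so
\[ \sigma \;=\; \frac{n-2}{2(n-1)}R - n\lambda \;\ge\; \frac{n-2}{2(n-1)}\cdot 2(n-1)\lambda - n\lambda \;=\; (n-2)\lambda - n\lambda \;=\; -2\lambda \;>\; 0. \]

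Next I would establish \eqref{weipoicineq}. With $h=e^{f}$ one computes $\Delta_{f}h = \Delta e^{f} - \langle\nabla f,\nabla e^{f}\rangle = e^{f}\Delta f = -\sigma h$, so $h$ is a positive $f$-superharmonic function. Given $\phi\in C_{0}^{\infty}(M)$, write $\phi=h\psi$ with $\psi=\phi e^{-f}\in C_{0}^{\infty}(M)$. Expanding $\vert\nabla\phi\vert^{2} = \psi^{2}\vert\nabla h\vert^{2} + 2h\psi\langle\nabla h,\nabla\psi\rangle + h^{2}\vert\nabla\psi\vert^{2}$ and integrating against $e^{-f}dV$, the first two terms assemble into $\int_{M}\langle\nabla h,\nabla(h\psi^{2})\rangle e^{-f}$, which by integration by parts (no boundary terms, since $\psi$ is compactly supported) equals $-\int_{M}(\Delta_{f}h)\,h\psi^{2}e^{-f} = \int_{M}\sigma\phi^{2}e^{-f}$. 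Hence $\int_{M}\vert\nabla\phi\vert^{2}e^{-f} = \int_{M}\sigma\phi^{2}e^{-f} + \int_{M}e^{2f}\vert\nabla\psi\vert^{2}e^{-f} \ge \int_{M}\sigma\phi^{2}e^{-f}$, which is precisely \eqref{weipoicineq}.

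Finally, combining \eqref{weipoicineq} with $\sigma\ge-2\lambda$ and the variational characterization $\mu_{1}(\Delta_{f}) = \inf\{\int_{M}\vert\nabla\phi\vert^{2}e^{-f} : \phi\in C_{0}^{\infty}(M),\ \int_{M}\phi^{2}e^{-f}=1\}$ yields $\mu_{1}(\Delta_{f})\ge-2\lambda>0$. For $f$-non-parabolicity I would observe that $h=e^{f}$ is a non-constant (because $f$ is non-constant) positive $f$-superharmonic function, and a complete manifold admitting such a function is $f$-non-parabolic; alternatively, $\mu_{1}(\Delta_{f})>0$ already forces $f$-non-parabolicity via the capacity characterization of parabolicity. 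I do not anticipate a genuine obstacle here: the entire argument hinges on the identity $\Delta f=-\sigma$, which singles out $e^{f}$ as the natural test function and makes the weighted Poincaré inequality sharp; the rest is a routine substitution and integration by parts. The only point deserving a word of care is that Theorem \ref{th11val33}, hence \eqref{111valter33} and \eqref{112valter33}, is stated for non-compact solitons, so one should note at the outset that a non-trivial complete expanding Schouten soliton is non-compact.
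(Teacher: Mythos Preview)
Your proof is correct and follows essentially the same path as the paper: both hinge on the computation $\Delta_{f}e^{f}=-\sigma e^{f}$ coming from $\Delta f=-\sigma$, and on the scalar curvature bound $R\ge 2(n-1)\lambda$ from Theorem~\ref{th11val33} to get $\sigma\ge -2\lambda$. The only difference is presentational: where the paper invokes Proposition~1.1 and Corollary~1.4 of Li--Wang \cite{pLiW3} as black boxes, you unpack that citation by carrying out the ground-state substitution $\phi=e^{f}\psi$ and the integration by parts explicitly, which makes your argument self-contained but is exactly the content of the cited result.
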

	\begin{proof}
		From Theorem \ref{th11val33}, we know that
		$2(n-1)\lambda\leq R\leq 0$, then, 
		\begin{align}\label{sign}
			\sigma\geq\frac{2(n-2)(n-1)\lambda}{2(n-1)}-n\lambda=-2\lambda>0.
		\end{align}
		By using the trace of equation (\ref{schoutensoliton}) we have
		\begin{align}\label{suphar}
			\Delta_f(\mbox{e}^f) =\left( \Delta_f(f)+\vert\nabla f\vert^2 \right)\mbox{e}^f=( \Delta f)\mbox{e}^f=-\left(\frac{n-2}{2(n-1)}R -n \lambda\right)\mbox{e}^f=-\sigma\mbox{e}^f
		\end{align}
		and, using Proposition 1.1 of \cite{pLiW3}, we obtain \eqref{weipoicineq}.

		Now, it follows from Corollary 1.4 of \cite{pLiW3} that the weighted Poincaré inequality proved above characterizes $M$ as an $f$-non-parabolic manifold.
		
		Finally, using \eqref{weipoicineq} and the lower bound of $\sigma$, we conclude that
		\begin{align*}
			-2\lambda\leq\frac{\int\limits_M\vert\nabla\phi\vert^2\mbox{e}^{-f}}{\int\limits_M\phi^2\mbox{e}^{-f}},
		\end{align*}
		for any $\phi\in C_0^\infty(M)$. Using the variational characterization of the spectrum of $\Delta_{f}$ we conclude that $\mu_{1}(\Delta_{f})\geq-2\lambda$.
	\end{proof}

	Now we proceed to the proof of Theorem \ref{thexpss}. First we show that $M$ has only $f$-non-parabolic ends. In the second step, we prove that there can be only one $f$-non-parabolic end in $M$, concluding $M$ must be connected at infinity or rigid.
	
	\begin{proof}[{\bf Proof of Theorem \ref{thexpss}}]
		From Proposition \ref{lem51forsxs} we know that $M$ is $f$-non-parabolic. We will show all ends of $M$ are $f$-non-parabolic. In order to do that, notice that by \eqref{sign} and \eqref{suphar}, $e^{f}$ is positive and $f$-superharmonic. If $E$ is an end of $M$, it follows from the growth behavior of $f$ given in \eqref{grtbhv} that $\displaystyle\liminf_{x\in E}e^{f(x)}=0$. By Proposition \ref{imptoo1}, $E$ is $f$-non-parabolic.
		
		Now, suppose $M$ is not connected at infinity. We will show $M$ is isometric to $\mathbb{R}\times N^{n-1}$, with $N$ compact. Under this assumption, by Proposition \ref{imptoo2}, there exists a positive non-constant $f$-harmonic function $h$ such that $h<1$ and
		\begin{equation}\label{energyh}
			\int\limits_M\vert\nabla h\vert^2\mbox{e}^{-f}<\infty.
		\end{equation}
		Once $\Delta_fh=0$, the Bochner formula for $\Delta_{f}$, equation (\ref{schoutensoliton}) and Kato's inequality give
		\begin{equation*}
			\begin{split}
				\frac{1}{2}\Delta_f\vert\nabla h\vert^2 =&\vert\nabla^2h\vert^2 +\mbox{Ric}_f(\nabla h,\nabla h)\\
				\geq&\vert\nabla\vert\nabla h\vert\vert^2+\left(\frac{R}{2(n-1)}+\lambda\right)\vert\nabla h\vert^2,
			\end{split}
		\end{equation*}
		and, since $\frac{1}{2}\Delta_f\vert\nabla h\vert^2=\vert\nabla h\vert\Delta_f\vert\nabla h\vert+\vert\nabla\vert\nabla h\vert\vert^2$, we obtain
		\begin{equation}\label{ineq1thexpss}
			\Delta_f\vert\nabla h\vert\geq\left(\frac{R}{2(n-1)}+\lambda\right)\vert\nabla h\vert.
		\end{equation}
		Consider a cut-off function $\phi$ and insert $\vert\nabla h\vert\phi$ in Poincaré's inequality \eqref{weipoicineq} to get
		\begin{equation}\label{ineq2thexpss}
			\begin{split}
				\int\limits_M \sigma\vert\nabla h\vert^2\phi^2\mbox{e}^{-f} \leq\int\limits_M\left(\vert\nabla\vert\nabla h\vert\vert^2\phi^2+2\vert\nabla h\vert\phi\langle\nabla\vert\nabla h\vert,\nabla\phi\rangle+\vert\nabla h\vert^2\vert\nabla\phi\vert^2\right)\mbox{e}^{-f}.
			\end{split}
		\end{equation}
		On the other hand, from Green's identity for $\Delta_{f}$ we know that
		\begin{equation}\label{ineq3thexpss}
			\begin{split}
				-\int\limits_M\vert\nabla h\vert(\Delta_f\vert\nabla h\vert)\phi^2\mbox{e}^{-f}=\int\limits_M\langle\nabla\vert\nabla h\vert,\nabla(\vert\nabla h\vert\phi^2)\rangle\mbox{e}^{-f}=\int\limits_M\langle\nabla\vert\nabla h\vert,\nabla\vert\nabla h\vert\phi^2+2\phi\vert\nabla h\vert\nabla\phi\rangle\mbox{e}^{-f}.
			\end{split}
		\end{equation}
		Thus, combining \eqref{ineq2thexpss} with \eqref{ineq1thexpss} and \eqref{ineq3thexpss} and regrouping terms we conclude that
		\begin{equation*}
			\begin{split}
				\int\limits_M\left(\sigma+\frac{R}{2(n-1)}+\lambda\right)\vert\nabla h\vert^2\phi^2\mbox{e}^{-f}\leq\int\limits_M\vert\nabla h\vert^2\vert\nabla\phi\vert^2\mbox{e}^{-f}
			\end{split}
		\end{equation*}
		Since $\vert\nabla h\vert$ has finite energy, we can pick a sequence of cut-off functions $\phi_{R}$ with $\phi_{R}\equiv1$ in $B_{p}(R)$ and support in $B_{p}(2R)$. For such functions, we obtain $\int_M\vert\nabla h\vert^2\vert\nabla\phi\vert\mbox{e}^{-f}\to 0$, as $R\rightarrow\infty$.
		
		Recalling $\sigma=\frac{n-2}{2(n-1)}R-n\lambda$, the inequality above implies
		\begin{equation*}
			\begin{split}
				0\leq\int\limits_M \left(\frac{1}{2}R-(n-1)\lambda\right)\vert\nabla h\vert^2\mbox{e}^{-f}\leq0,
			\end{split}
		\end{equation*}
		where the inequality on the left follows from (\ref{111valter33}). Once the integral above vanishes, the integrand is non-negative and $h$ is non-constant, we must have $R=2(n-1)\lambda$, and \eqref{schoutensoliton} turns into
		\begin{align*}
			\mbox{Ric}+\nabla^2f=2\lambda g.
		\end{align*}
		As a consequence, $(M,g,f,\lambda_{0})$ is a gradient Ricci soliton with constant scalar curvature  $R=(n-1)\lambda_{0}$, with $\lambda_0=2\lambda$. It thus follows from Theorem \ref{thRNcompact} that $M$ must be isometric to the product $\mathbb{R}\times N^{n-1}$.
	\end{proof}

			\end{document}